\newcommand{\seqnum}[1]{\href{http://oeis.org/#1}{\underline{#1}}}
\theoremstyle{plain}
\newtheorem{theorem}{Theorem}[section]
\newtheorem{corollary}[theorem]{Corollary}
\newtheorem{remark}[theorem]{Remark}
\theoremstyle{definition}
\newtheorem{definition}[theorem]{Definition}
\newtheorem{openproblem}{Open Problem}[section]
\newcommand{\Sym}{{\mathfrak S}}
\newcommand{\I}{{\mathcal{I}}}
\newcommand{\maj}{{\sf{maj}}}
\newcommand{\inv}{{\sf{inv}}}
\newcommand{\D}{{\mathcal D}}
\newcommand{\col}{{\sf{col}}}
\title{Inversions in Colored Permutations, Derangements, and Involutions}
\date{\today}
\subjclass[2010]{05A05, 05A15, 05A19.}
\keywords{Colored Mahonian numbers, colored permutations, colored derangements, colored involutions, inversions.}
\begin{document}

\author[M. Ahmia]{Moussa Ahmia}
\address{University of Mohamed Seddik Benyahia, LMAM laboratory, Jijel, Algeria}
\email{moussa.ahmia@univ-jijel.dz;
ahmiamoussa@gmail.com}

\author[J. L. Ram\'{\i}rez]{Jos\'e L. Ram\'{\i}rez}
\address{Departamento de Matem\'aticas,  Universidad Nacional de Colombia,  Bogot\'a, Colombia}
\email{jlramirezr@unal.edu.co}

\author[D. Villamizar]{Diego Villamizar}
\address{Department of Mathematics, Xavier University of Louisiana, New Orleans LA, USA}
\email{dvillami@xula.edu}
\newcommand{\nadji}[1]{\mbox{}{\sf\color{green}[Ram\'{\i}rez: #1]}\marginpar{\color{green}\Large$*$}}

\begin{abstract}
Arslan, Altoum, and Zaarour introduced an inversion statistic for generalized symmetric groups \cite{Ars2}. In this work, we study the distribution of this statistic over colored permutations, including derangements and involutions. By establishing a bijective correspondence between colored permutations and colored Lehmer codes, we develop a unified framework for enumerating colored Mahonian numbers and analyzing their combinatorial properties. We derive explicit formulas, recurrence relations, and generating functions for the number of inversions in these families, extending classical results to the colored setting. We conclude with explicit expressions for inversions in colored derangements and involutions.
\end{abstract}

\maketitle
\section{Introduction}

Let $\Sym_n$ denote the set of all permutations of the set  $[n]:=\{1, 2, \dots, n\}$. Two classical statistics associated with a permutation  $\pi=\pi_1 \pi_2\cdots \pi_n  \in \Sym_n$ are the \emph{major index} and \emph{inversion number}, defined respectively by
\begin{align*}
    \maj(\pi):=\sum_{\pi_i>\pi_{i+1}}i \quad \text{and} \quad  \inv(\pi):=|\{(i,j): i<j \text{ and } \pi_i>\pi_j\}|.
\end{align*}
For example, for the permutation $\pi=2314$, we have 
$\maj(2314)=2$ and $\inv(2314)=2$. Throughout this work, we use one-line notation for permutations; that is, the permutation  $\pi = \left(\begin{smallmatrix} 1 & 2 & 3 & 4  \\
2 & 3 & 1 & 4 \end{smallmatrix}\right)$
in two-line notation is written simply as $\pi = 2314$.

A classical result due to MacMahon~\cite{Mac, Mac2} establishes that the major index and inversion number are equidistributed over $\Sym_n$. In other words, 
\begin{align}\label{MacMahon}
    \sum_{\pi \in \Sym_n} q^{\maj(\pi)} =
    \sum_{\pi \in \Sym_n} q^{\inv(\pi)} = [1]_q[2]_q\cdots [n]_q,
\end{align}
where $[i]_q:=1+q+\cdots + q^{i-1}$ denotes the $q$-analogue of the integer $i$.

Let $\I(n,k)$ denote the set of permutations of $[n]$ with exactly $k$ inversions, and let $i(n,k):=|\I(n,k)|$.  The sequence $i(n,k)$ is called the \emph{Mahonian numbers}.  For example, 
$$\I(4,2)=\{ 1342,\, 1423, \, 2143, \, 2314,\,  3124\}.$$

From \eqref{MacMahon}, it follows that the generating function of the Mahonian numbers is given by
$$\sum_{k=0}^{\binom{n}{2}}i(n,k)q^k=[1]_q[2]_q\cdots [n]_q.$$

For a positive integer $c$, consider the alphabet of $cn$ distinct symbols defined by
\[
\Sigma_{c,n} := \left\{ i^{[j]} : 1 \le i \le n,\ 0 \le j < c \right\}.
\]
An element of the form $i^{[j]}$ is called a \emph{colored element}. For small values of $j$, we adopt a shorthand notation using bars: specifically, $i^{[0]} := i$, $i^{[1]} := \overline{i}$, and $i^{[2]} := \overline{\overline{i}}$. By convention, we also define $i^{[c]} := i$ to ensure periodicity modulo $c$. Therefore, the set $\Sigma_{c,n}$ can be written explicitly as
\[
\Sigma_{c,n} = \left\{1,\, \overline{1},\, \overline{\overline{1}},\, \dots,\, 1^{[c-1]},\, 2,\, \overline{2},\, \overline{\overline{2}},\, \dots,\, 2^{[c-1]},\, \dots,\, n,\, \overline{n},\, \overline{\overline{n}},\, \dots,\, n^{[c-1]} \right\}.
\]

A \emph{colored permutation} $\sigma$ is a bijection on the set $\Sigma_{c,n}$ that satisfies   the condition 
\[
\sigma(i^{[j]}) = \left(\sigma(i)\right)^{[j]}
\quad \text{for all } i \in [n] \text{ and } 0 \le j < c.
\]
For example,  the following is a colored permutation of $\Sigma_{3,4}$:
 \begin{equation}
\label{pi-def}
\sigma = \begin{pmatrix} 1 & 2 & 3 & 4 & \overline{1} & \overline{2} & \overline{3} & \overline{4} & \overline{\overline{1}} &  \overline{\overline{2}} &  \overline{\overline{3}} &  \overline{\overline{4}} \\
\overline{3} & 2 & \overline{\overline{1}} & \overline{4} & \overline{\overline{3}} & \overline{2} & 1 & \overline{\overline{4}} & 3 & \overline{\overline{2}} & \overline{1} & 4
\end{pmatrix}.
\end{equation}
By omitting the top row, we obtain the \emph{one-line notation} for $\sigma$: 
\[
\sigma = \overline{3}\, 2\, \overline{\overline{1}}\, \overline{4}\, \overline{\overline{3}}\, \overline{2}\, 1\, \overline{\overline{4}}\, 3\, \overline{\overline{2}}\, \overline{1}\, 4.
\]

Let $G_{c,n}$ denote the set of colored permutations of $\Sigma_{c,n}$. Clearly, $|G_{c,n}|=c^nn!$.   We represent an element of $G_{c,n}$ by a word $\sigma_1^{\col(\sigma_1)}\sigma_2^{\col(\sigma_2)}\cdots \sigma_n^{\col(\sigma_n)}$, which corresponds to the first $n$ entries  in the one-line notation. Here, $\col(\sigma_i)$ denotes the color of the entry $\sigma_i$, that is, the superscript (or number of bars) of $\sigma_i$. For example, the colored permutation in \eqref{pi-def} is written as $\overline{3}\,2\, \,\overline{\overline{1}}\,\overline{4}$ and $\col(\sigma_1)=1, \col(\sigma_2)=0, \col(\sigma_3)=2$, and $\col(\sigma_4)=0$.

In  algebraic combinatorics,  the set  $G_{c,n}$ corresponds to the wreath product $C_c\wr \Sym_n$, where $C_c=\{0, 1, \dots, c-1\}$ is the cyclic group of order $c$, and $\Sym_n$ is the symmetric group on $[n]$.  While the group structure of $G_{c,n}$ will not play a role in this work, we point out that it generalizes several well-known families: for $c = 1$, we recover the symmetric group $\Sym_n$, which is the Coxeter group of type $A$; and for $c = 2$, we obtain the hyperoctahedral group, corresponding to the Coxeter group of type $B$.

Recently, Arslan, Altoum, and Zaarour~\cite{Ars2} introduced an inversion statistic on the set of colored permutations. Let $\sigma=\sigma_1^{\col(\sigma_1)}\sigma_2^{\col(\sigma_2)}\cdots \sigma_n^{\col(\sigma_n)}$ be a colored permutation in  $G_{c,n}$. We define the \emph{underlying permutation} of $\sigma$ as $|\sigma| := \sigma_1 \sigma_2 \cdots \sigma_n$, and the \emph{color statistic} as 
\[
\col(\sigma) := \col(\sigma_1) + \col(\sigma_2) + \cdots + \col(\sigma_n).
\]
A \emph{colored inversion} of $\sigma$ is then defined by
\begin{align}\label{maindef}
    \inv_c(\sigma):=\inv(|\sigma|) + \col(\sigma) + c \sum_{c_j \neq 0} |\{(i,j): i<j \text{ and } \sigma_i<\sigma_j\}|.
\end{align}
For example, for the colored permutation given in~\eqref{pi-def}, we have $\inv_c(\sigma) = 3 + 4 + 3 \cdot 3 = 16$. 

The colored permutation in $G_{c,n}$ that achieves the maximum number of colored inversions is $\sigma_{\max}:=1^{[c-1]}\,2^{[c-1]}\,\cdots n^{[c-1]}$, which satisfies $\inv_c(\sigma_{\max})=(c-1) n + c\binom{n}{2}$ inversions. Other statistics on colored permutations have also been studied; see, for instance,~\cite{Ste}.

Arslan, Altoum, and Zaarour~\cite{Ars2} proved the following extension of \eqref{MacMahon}: 
\begin{align}
    \label{qidentity1}
   \sum_{\sigma\in G_{c,n}}q^{\inv_c(\sigma)} = [c]_q[2c]_q\cdots [cn]_q.
\end{align}
In a similar vein, Bagno~\cite{Bagno} introduced a generalization of the major index and proved that its distribution over $G_{c,n}$ is also given by the identity \eqref{qidentity1}.

Consider the set of colored  permutations of $[n]$ with $k$ inversions, defined by
$$\I_c(n,k):=\{\sigma\in G_{c,n}: \inv_c(\sigma)=k\},$$ 
and let $i_c(n,k):=|\I_c(n,k)|$.  The sequence $i_c(n,k)$ is called the \emph{colored Mahonian numbers}.  For example, $i_3(4,2) = 10$, and the corresponding permutations are 
$$\I_3(4,2)=\{ 1342, \, 1423, \, 2143, \, 2314, \, 3124, \, \bar{2}134, \, \bar{1}243, \,\bar{1}324,  \,2\bar{1}34,  \, \overline{\overline{1}}234\}.$$

From \eqref{MacMahon}, it is clear the following generating function:
\begin{align}\label{gfcolr}
\sum_{k=0}^{(c-1) n + c\binom{n}{2}}i_c(n,k)q^k=[c]_q\cdots [cn]_q=\prod_{i=1}^{n}\frac{1-q^{ic}}{1-q}.
\end{align}

Let $I_{c,n}$ denote the total number of colored inversions in $G_{c,n}$. It is clear that 
\begin{equation*}
I_{c,n}=\sum_{\sigma\in G_{c,n}}\inv_c(\sigma)=\sum_{k=0}^{(c-1) n + c\binom{n}{2}}ki_c(n,k).
\end{equation*}

The outline of the paper is as follows. In Section~2, we introduce a colored variant of Lehmer codes and establish a bijection between these codes and  colored permutations in $G_{c,n}$. This construction provides a natural encoding of colored permutations, facilitating the analysis of their combinatorial properties. In Section~3, we use colored Lehmer codes to generalize several classical results related to the Mahonian numbers. Notably, our approach extends previous findings for type $A$ and $B$ permutations, offering a unified framework applicable to arbitrary colorings.  The Lehmer codes offer alternative proofs for known results.  Section~4 focuses on the total number of inversions in colored derangements. We derive explicit combinatorial identities to compute these.  Finally, in Section 5, we study the total number of inversion  on the set of colored inversion.  We obtain nontrivial formulas for these and provide generating functions.

\section{Colored Lehmer codes}
A \emph{Lehmer code} of length $n$ is an $n$-tuple $\ell = (\ell _1,\ell _2, \dots, \ell_n)$ such that $0\leq \ell_i<i$ for all $i=1, 2, \dots, n$. We denote by $\mathcal{L}_n$ the set of all Lehmer codes of length $n$. It is well known that  $\mathcal{L}_n$ is in bijection with the set of permutations of $[n]$ (cf.\, \cite{KN1}). This correspondence is given explicitly as follows. Given a permutation $\sigma \in \Sym_n$, its Lehmer code is defined by $$L(\sigma) = (\inv^{(1)}(\sigma),\inv^{(2)}(\sigma),\dots, \inv^{(n)}(\sigma)),$$
where 
\[\inv^{(i)}(\sigma)= |\{j \in [i-1]: j\text{ appears to the right of }i\text{ in }\sigma\}|.\]
For example, let $\sigma = 3214 \in \Sym_4$. The corresponding Lehmer code is $L(\sigma) = (0,1,2,0)$. Indeed, there are no elements smaller than $1$ to its right, so the first entry is $0$; there is one element smaller than $2$ to its right (namely $1$), so the second entry is $1$; there are two elements smaller than $3$ to its right ($1$ and $2$), yielding the third entry $2$; and finally, there are no elements smaller than $4$ to its right, so the last entry is $0$.

By construction, it is clear that $L(\sigma) \in \mathcal{L}_{n}$, and its entries refine the inversion set of $\sigma$. Moreover, the original permutation can be uniquely reconstructed from its Lehmer code by starting with an empty sequence and sequentially inserting the elements of $[n]$ one by one, placing the $i$-th number in the $\ell_i$-th position from the right.  For example, consider the code $(0,1,2,0) \in \mathcal{L}_4$.  The reconstruction process is illustrated in Figure~\ref{figLehmer}.
    \begin{figure}[h!]
        \centering
        \includegraphics[scale=0.7]{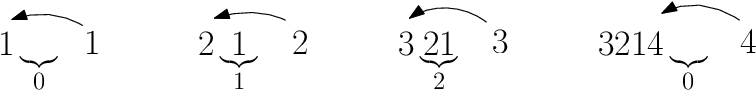}
        \caption{Reconstructing the permutation from its Lehmer code.}
        \label{figLehmer}
    \end{figure}
    
As observed above, the Lehmer code refines the inversion set in the sense that the sum of its entries equals the total number of inversions in the corresponding permutation. This motivates the definition of the following set:
$$\mathcal{L}_{n,k} = \{\ell \in \mathcal{L}_n: \ell _1+\ell _2+\cdots +\ell _n=k\},$$
which consists of all Lehmer codes of length $n$ whose entries sum to  $k$. By construction, it follows that   $\left | \mathcal{L}_{n,k}\right |= i(n,k)$. We now introduce the notion of \textit{colored Lehmer codes}.

\begin{definition}
    Let $n$ and $c$ be non-negative integers. A \emph{colored Lehmer code} is an $n$-tuple $\ell = (\ell _1,\dots,\ell _n)$ such that each entry satisfies $0\leq \ell _i <c\cdot i$. The set of colored Lehmer codes of length $n$ is denoted by $\mathcal{L}^{(c)}_{n}$.  Furthermore, the subset of colored Lehmer codes whose entries sum to a fixed integer $k$ is given by $$\mathcal{L}_{n,k}^{(c)} = \{\ell \in \mathcal{L}_n^{(c)}: \ell _1+\ell _2+\cdots +\ell _n=k\},$$
\end{definition}

For example, 
\begin{align*}
    \mathcal{L}_{4,5}^{(2)}=&\{0005,0014,0023,0032,0041,0050,0104,0113,0122,0131,014
   0, 0203,0212,0221,\\
   &0230,0302,0311,0320,1004,1013,1022,
   1031,1040,1103,1112,1121,1130,1202,\\
   &1211,1220,1301,1310\}.
\end{align*}

Observe that $|\mathcal{L}^{(c)}_{n}| = (c\cdot 1)\cdot (c\cdot 2)\cdots (c\cdot n) = c^nn!$, which suggests the existence of a bijection between  the set of colored permutations $G_{c,n}$ and the set of colored Lehmer codes $\mathcal{L}^{(c)}_{n}$. In what follows,  we show that this can be refined to colored Lehmer codes whose entries sum to $k$ and the colored Mahonian numbers. 
\begin{theorem}
\label{thmcollehcode}
    The number of colored Lehmer codes in $\mathcal{L}^{(c)}_{n,k}$ is given by $i_c(n,k)$.
\end{theorem}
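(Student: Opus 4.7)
The plan is to construct an explicit statistic-preserving bijection
$$\Phi \colon G_{c,n} \longrightarrow \mathcal{L}^{(c)}_n, \qquad \sigma \longmapsto (\ell_1,\ldots,\ell_n),$$
satisfying $\ell_1 + \cdots + \ell_n = \inv_c(\sigma)$. Restricting $\Phi$ to the fibre $\inv_c(\sigma) = k$ then identifies $\I_c(n,k)$ with $\mathcal{L}^{(c)}_{n,k}$ and yields the theorem.

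To define $\Phi$, I first decompose $\inv_c(\sigma)$ into contributions indexed by the underlying value. For $v \in [n]$, let $p(v)$ be the position in $\sigma$ of the entry with underlying value $v$, let $c_v := \col(\sigma_{p(v)})$ be its color, and set
$$A_v := |\{w \in [v-1] : p(w) > p(v)\}|,$$
the classical Lehmer coordinate of $|\sigma|$ at $v$. Then $\inv(|\sigma|) = \sum_v A_v$, $\col(\sigma) = \sum_v c_v$, and the third term of \eqref{maindef} equals $c \sum_{v \colon c_v \neq 0}(v - 1 - A_v)$, since $v - 1 - A_v$ counts the values in $[v-1]$ lying to the left of $v$ in $\sigma$. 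Grouping these contributions by $v$ motivates the definition
$$\ell_v \;:=\; A_v + c_v + c\,[c_v \neq 0]\,(v - 1 - A_v),$$
for which $\sum_v \ell_v = \inv_c(\sigma)$ holds by construction.

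The main verification is that the map $(A_v, c_v) \mapsto \ell_v$ is a bijection $[0, v-1] \times [0, c-1] \to [0, cv - 1]$, which ensures $\Phi(\sigma) \in \mathcal{L}^{(c)}_n$ and makes $\Phi$ invertible. The branch $c_v = 0$ gives $\ell_v = A_v \in [0, v-1]$. For $c_v \neq 0$, the formula rewrites as $\ell_v - v = (c-1)(v - 1 - A_v) + (c_v - 1)$, and the division algorithm identifies $\ell_v - v \in [0, (c-1)v - 1]$ bijectively with the pair $(v-1-A_v,\, c_v - 1) \in [0, v-1] \times [0, c-2]$, so this branch sweeps $[v, cv-1]$ exactly. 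The two branches therefore tile $[0, cv-1]$ disjointly. The inverse of $\Phi$ then reads off $(A_v, c_v)$ from $\ell_v$, reconstructs $|\sigma|$ from $(A_1, \ldots, A_n)$ by the classical Lehmer procedure recalled before the theorem, and assigns color $c_v$ to the entry carrying value $v$.

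The only nontrivial step is the case split forced by the indicator $[c_v \neq 0]$ in \eqref{maindef}: one must verify that the two regimes of $\ell_v$ tile $[0, cv - 1]$ without overlap, which is exactly the division-algorithm identification above. Once this is in place, bijectivity of $\Phi$ and the statistic identity $\sum_v \ell_v = \inv_c(\sigma)$ are immediate, yielding $|\mathcal{L}^{(c)}_{n,k}| = i_c(n,k)$.
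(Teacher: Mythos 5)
Your proposal is correct, and it takes a genuinely different route from the paper. The paper does not construct a bijection that preserves $\inv_c$: it invokes the prior identity \eqref{qidentity1} of Arslan, Altoum, and Zaarour, rewrites the product as $[c]_q^n[n]_{q^c}!$, and then shows that $|\mathcal{L}^{(c)}_{n,k}|$ is given by the same convolution of classical Mahonian numbers with bounded compositions, via the decomposition $\ell_i = c\,a_i + b_i$ with $0\le b_i<c$. That decomposition matches the \emph{modified} statistic $c\cdot\inv(|\sigma|)+\col(\sigma)$ (the statistic $\accentset{\kern-.4em\sim}{\inv_c}$ discussed in the remark after the theorem), so the paper only concludes equality of cardinalities, outsourcing the hard combinatorial content to \eqref{qidentity1}. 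You instead build an explicit statistic-preserving bijection $\Phi\colon G_{c,n}\to\mathcal{L}^{(c)}_n$ by grouping the three terms of \eqref{maindef} by underlying value $v$ and checking that $(A_v,c_v)\mapsto \ell_v$ tiles $\{0,\dots,cv-1\}$, with the colorless branch covering $\{0,\dots,v-1\}$ and the colored branch covering $\{v,\dots,cv-1\}$ via division by $c-1$; I verified the arithmetic, including the identity $\ell_v - v = (c-1)(v-1-A_v)+(c_v-1)$ and the degenerate case $c=1$. Your argument buys strictly more: it is self-contained (it does not assume \eqref{qidentity1} but rather reproves it bijectively, since $\sum_{\sigma}q^{\inv_c(\sigma)}=\prod_{v=1}^n[cv]_q$ falls out of $\Phi$), and it exhibits an explicit bijection between $\I_c(n,k)$ and $\mathcal{L}^{(c)}_{n,k}$ rather than a mere equality of counts. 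The paper's route is shorter given \eqref{qidentity1} and has the side benefit of directly producing the convolution formula for $i_c(n,k)$ in terms of $i(n,k)$ and bounded compositions.
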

\begin{proof}
    We begin by observing that
    \begin{align*}
        [c\cdot i]_q&= 1+q+\cdots +q^{c-1}+q^c+\cdots +q^{2c-1}+\cdots +q^{c\cdot i-1}\\
        &=(1+q+\cdots +q^{c-1})+q^c(1+q+\cdots +q^{c-1})+\cdots +q^{c(i-1)}(1+q+\cdots +q^{c-1})\\
        &=[c]_q\left (1+q^c+q^{2c}+\cdots +q^{c(i-1)}\right )\\
        &=[c]_q\cdot [i]_{q^c}.
    \end{align*}
Therefore,  the right-hand side of Equation~\eqref{qidentity1} can be expressed as
    $$   \sum_{\sigma\in G_{c,n}}q^{\inv_c(\sigma)} =[c]_q[2c]_q\cdots [nc]_q = [c]_q^n[n]_{q^c}!.$$
The coefficient of $q^b$ in the expansion of $[c]_q^n$ corresponds to the number of compositions (i.e., ordered integer partitions) $\bold{b} = (b_1,\dots, b_n)$ of $b$ into $n$ parts, where each part satisfies $0\leq b _i < c$ and $b_1+\cdots+b_n=b$. We denote this set of compositions by $\textbf{Com}_{n,b}^{<c}$.  Expanding the product $[c]_q^n[n]_{q^c}!$, we obtain
\begin{align*}
    [c]_q^n\cdot [n]_{q^c}! &= \left (1+q+\cdots + q^{c-1}\right )^n \left (\sum _{a= 0}^{\binom{n}{2}}i(n,a)q^{a\cdot c}\right )\\
    &=\sum _{k = 0}^{(c-1)n+c\binom{n}{2}}q^k \left (\sum _{c\cdot a+b = k}\left | \textbf{Com}_{n,b}^{<c}\right |\cdot \left | \mathcal{L}_{n,\frac{k-b}{c}}\right |\right ).
\end{align*}
The equality above  provides a formula for $i_c(n,k)$ in terms of $i(n,k)$. It remains to shown that this formula also counts the number of colored Lehmer codes. That is, we need to prove
$$\left |\mathcal{L}^{(c)}_{n,k}\right |= \sum _{c\cdot a+b = k}\left | \textbf{Com}_{n,b}^{<c}\right |\cdot \left | \mathcal{L}_{n,\frac{k-b}{c}}\right |.$$

To stablish this, we consider a decomposition of a colored Lehmer code.  Given $\ell = (\ell _1,\ell _2, \dots ,\ell _n) \in \mathcal{L}^{(c)}_{n,k}$,  the division algorithm allows us to write 
    \[
    \ell_i = c\cdot a_i + b_i, \quad \text{with } 0 \leq b_i < c.
    \]
Define  $\textbf{a}_{\ell} = (a_1,\dots ,a_n)$ and $\textbf{b}_{\ell} = (b_1,\dots,b_n)$. The condition  \(\ell_i = c\cdot a_i + b_i < c\cdot i\) implies $0\leq a_i<i$, so $\textbf{a}_{\ell} \in \mathcal{L}_{n,a}$. Similarly, since  $0 \leq b_i < c$, we also have $\textbf{b}_{\ell} \in \textbf{Com}_{n,b}^{<c}$.  This decomposition is reversible. Given $\textbf{a} \in \mathcal{L}_{n,a}$ and $\textbf{b} \in \textbf{Com}_{n,b}^{<c}$, we obtain $c\cdot \textbf{a}+\textbf{b} \in \mathcal{L}^{(c)}_{n,c\cdot a+b}$. The result follows by summing over all possible values of $a$ and $b$ such that $k = c\cdot a+b$.
\end{proof}

    The proof of Theorem \ref{thmcollehcode} uses a decomposition of Lehmer codes using the division algorithm. When starting with the colored Lehmer codes, this decomposition can be interpreted as colored permutations $\sigma \in G_{c,n}$, where  $\accentset{\kern-.4em\sim}{\inv_c}(\sigma) = c\cdot \inv (|\sigma|)+\col (\sigma) = k$. In this sense, the two statistics $\accentset{\kern-.4em\sim}{\inv_c}$ and ${\inv}_c$ have the same distribution in $G_{c,n}$. For example, consider the case $c = 2$ and $n = 3$. In this case, the distribution for both statistics is as follows:
    
    \begin{center}
  \begin{tabular}{ |c|c|c| } 
\hline
$k$ & $ {\inv}_c$ &  $\accentset{\kern-.4em\sim}{\inv_c}$\\
\hline
0 & $  1 2 3 $ & $  1 2 3 $ \\
1 & $  \overline{1} 2 3 \,  1 3 2 \,  2 1 3 $ & $  1 2 \overline{3} \,  1 \overline{2} 3 \,  \overline{1} 2 3 $ \\
2 & $  \overline{1} 3 2 \,  2 \overline{1} 3 \,  \overline{2} 1 3 \,  2 3 1 \,  3 1 2 $ & $  1 \overline{2} \overline{3} \,  \overline{1} 2 \overline{3} \,  \overline{1} \overline{2} 3 \,  1 3 2 \,  2 1 3 $ \\
3 & $  1 \overline{2} 3 \,  \overline{2} \overline{1} 3 \,  2 3 \overline{1} \,  \overline{2} 3 1 \,  3 \overline{1} 2 \,  \overline{3} 1 2 \,  3 2 1 $ & $  \overline{1} \overline{2} \overline{3} \,  1 3 \overline{2} \,  1 \overline{3} 2 \,  \overline{1} 3 2 \,  2 1 \overline{3} \,  2 \overline{1} 3 \,  \overline{2} 1 3 $ \\
4 & $  \overline{1} \overline{2} 3 \,  1 3 \overline{2} \,  1 \overline{3} 2 \,  \overline{2} 3 \overline{1} \,  \overline{3} \overline{1} 2 \,  3 2 \overline{1} \,  3 \overline{2} 1 \,  \overline{3} 2 1 $ & $  1 \overline{3} \overline{2} \,  \overline{1} 3 \overline{2} \,  \overline{1} \overline{3} 2 \,  2 \overline{1} \overline{3} \,  \overline{2} 1 \overline{3} \,  \overline{2} \overline{1} 3 \,  2 3 1 \,  3 1 2 $ \\
5 & $  1 2 \overline{3} \,  \overline{1} 3 \overline{2} \,  \overline{1} \overline{3} 2 \,  2 \overline{3} 1 \,  3 1 \overline{2} \,  3 \overline{2} \overline{1} \,  \overline{3} 2 \overline{1} \,  \overline{3} \overline{2} 1 $ & $  \overline{1} \overline{3} \overline{2} \,  \overline{2} \overline{1} \overline{3} \,  2 3 \overline{1} \,  2 \overline{3} 1 \,  \overline{2} 3 1 \,  3 1 \overline{2} \,  3 \overline{1} 2 \,  \overline{3} 1 2 $ \\
6 & $  \overline{1} 2 \overline{3} \,  2 1 \overline{3} \,  2 \overline{3} \overline{1} \,  \overline{2} \overline{3} 1 \,  3 \overline{1} \overline{2} \,  \overline{3} 1 \overline{2} \,  \overline{3} \overline{2} \overline{1} $ & $  2 \overline{3} \overline{1} \,  \overline{2} 3 \overline{1} \,  \overline{2} \overline{3} 1 \,  3 \overline{1} \overline{2} \,  \overline{3} 1 \overline{2} \,  \overline{3} \overline{1} 2 \,  3 2 1 $ \\
7 & $  1 \overline{3} \overline{2} \,  2 \overline{1} \overline{3} \,  \overline{2} 1 \overline{3} \,  \overline{2} \overline{3} \overline{1} \,  \overline{3} \overline{1} \overline{2} $ & $  \overline{2} \overline{3} \overline{1} \,  \overline{3} \overline{1} \overline{2} \,  3 2 \overline{1} \,  3 \overline{2} 1 \,  \overline{3} 2 1 $ \\
8 & $  1 \overline{2} \overline{3} \,  \overline{1} \overline{3} \overline{2} \,  \overline{2} \overline{1} \overline{3} $ & $  3 \overline{2} \overline{1} \,  \overline{3} 2 \overline{1} \,  \overline{3} \overline{2} 1 $ \\
9 & $  \overline{1} \overline{2} \overline{3} $ & $  \overline{3} \overline{2} \overline{1} $ \\
\hline
\end{tabular}
\end{center}

\begin{remark}
Colored Lehmer codes correspond to the interpretation given by Arslan \cite{Ars1} as the number of ways to place $k$ balls into $n$ boxes, where the $i$-th box can contain at most $2i-1$ balls, for type  B.
\end{remark}

\section{Properties of the Colored Mahonian Numbers}

In this section, we present several properties of the colored Mahonian numbers.  Using the framework of Lehmer codes, we establish a symmetry identity and a recurrence relation that involves integer partitions.  We also employ generating functions to generalize the Knuth–Netto formula. The section concludes with recurrence relations for the total number of colored inversions.

\begin{theorem}[Symmetry]
    For all $n>1$,  $c\geq 1$, and $0\leq k \leq (c-1) n + c\binom{n}{2}$, we have
       $$i_c(n,k)=i_c\left(n, (c-1) n + c\binom{n}{2}-k\right).$$
\end{theorem}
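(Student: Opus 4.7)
The plan is to use Theorem \ref{thmcollehcode} to reduce the statement about colored Mahonian numbers to a statement about colored Lehmer codes, and then exhibit a simple involution on $\mathcal{L}^{(c)}_n$ that sends sum $k$ to sum $M-k$, where $M := (c-1)n + c\binom{n}{2}$. By Theorem \ref{thmcollehcode}, $i_c(n,k) = |\mathcal{L}^{(c)}_{n,k}|$, so it suffices to produce a bijection $\mathcal{L}^{(c)}_{n,k} \to \mathcal{L}^{(c)}_{n,M-k}$.

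The natural candidate is the coordinatewise complement $\phi : \mathcal{L}^{(c)}_n \to \mathcal{L}^{(c)}_n$ defined by
$$\phi(\ell_1, \ell_2, \ldots, \ell_n) := \bigl(c - 1 - \ell_1,\ 2c - 1 - \ell_2,\ \ldots,\ nc - 1 - \ell_n\bigr).$$
Since $0 \le \ell_i \le ci - 1$, the $i$th coordinate of $\phi(\ell)$ again lies in $\{0, 1, \ldots, ci-1\}$, so $\phi$ is well defined and is clearly an involution. Summing coordinates gives
$$\sum_{i=1}^{n}\bigl(ci - 1 - \ell_i\bigr) = \sum_{i=1}^{n}(ci - 1) - k.$$
A short computation shows $\sum_{i=1}^n (ci-1) = c\binom{n+1}{2} - n = (c-1)n + c\binom{n}{2} = M$, so $\phi$ restricts to the desired bijection and the symmetry follows.

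There is no real obstacle here; the only point to check carefully is the algebraic identity $c\binom{n+1}{2} - n = (c-1)n + c\binom{n}{2}$, which is a one-line rearrangement. As an alternative that bypasses Lehmer codes entirely, one could argue directly from the generating function \eqref{gfcolr}: each factor $[ci]_q = 1 + q + \cdots + q^{ci-1}$ is palindromic of degree $ci-1$, and a product of palindromic polynomials is palindromic with degree equal to the sum of the degrees. Since $\sum_{i=1}^n (ci-1) = M$, the coefficient of $q^k$ equals the coefficient of $q^{M-k}$, which is the claim. I would present the bijective proof as the main argument, since it fits the Lehmer-code framework the paper is developing, and mention the generating function version as a remark.
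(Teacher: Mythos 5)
Your proposal is correct and coincides with the paper's own proof: both use the coordinatewise complement $\ell_i \mapsto ci-1-\ell_i$ on colored Lehmer codes, verify it is an involution, and compute that the coordinate sum maps $k$ to $(c-1)n+c\binom{n}{2}-k$. The generating-function remark about palindromic factors is a nice aside but not needed.
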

\begin{proof}
     Let $\ell \in \mathcal{L}^{(c)}_{n,k}$ be a colored Lehmer code. Since $0\leq \ell _i < c\cdot i$, define a new code $\tilde{\ell} = (\tilde{\ell}_1, \dots, \tilde{\ell}_n)$ by setting   $\tilde \ell _i = c\cdot i -1-\ell _i$ for each $i$. Then $\tilde{\ell}$ is also a valid colored Lehmer code. Adding over the components of $\tilde{\ell}$, we have 
    \begin{align*}
        \sum _{i = 1}^n \tilde \ell _i &= \sum _{i = 1}^n \left ( c\cdot i -1-\ell _i \right )=  c\binom{n+1}{2}-n-k \\&= c\left (n+\binom{n}{2}\right )-n-k = (c-1)n+c\binom{n}{2}-k.
    \end{align*}
    The correspondence $\ell \mapsto \tilde \ell$ is an involution and defines a bijection between $\mathcal{L}^{(c)}_{n,k}$ and $\mathcal{L}^{(c)}_{n,(c-1)n+c\binom{n}{2}-k}$. The result follows.
\end{proof}

\begin{theorem}[Summation property]
 For all $n>1$,  $c\geq 1$, and $0\leq k \leq (c-1) n + c\binom{n}{2}$, we have
 $$i_c(n,k)=\sum_{j=0}^{cn-1} i_c(n-1,k-j).$$
\end{theorem}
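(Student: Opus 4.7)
The plan is to transport the statement to colored Lehmer codes via Theorem~\ref{thmcollehcode} and then argue by stripping off the last entry of the code. By that theorem, $i_c(n,k)=|\mathcal{L}^{(c)}_{n,k}|$ and $i_c(n-1,k-j)=|\mathcal{L}^{(c)}_{n-1,k-j}|$, with the convention that $\mathcal{L}^{(c)}_{n-1,m}=\emptyset$ whenever $m<0$ or $m>(c-1)(n-1)+c\binom{n-1}{2}$ (so those indices contribute zero and we may freely sum $j$ from $0$ to $cn-1$).

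I would then partition $\mathcal{L}^{(c)}_{n,k}$ according to the value of the last coordinate. Given $\ell=(\ell_1,\ldots,\ell_{n-1},\ell_n)\in \mathcal{L}^{(c)}_{n,k}$, the last entry satisfies $0\le \ell_n < cn$, and the truncation $\ell':=(\ell_1,\ldots,\ell_{n-1})$ automatically satisfies $0\le \ell_i<c\cdot i$ for every $i\le n-1$; moreover its entries sum to $k-\ell_n$. Hence $\ell'\in \mathcal{L}^{(c)}_{n-1,k-\ell_n}$. Conversely, given any $j$ with $0\le j\le cn-1$ and any $\ell'\in \mathcal{L}^{(c)}_{n-1,k-j}$, appending $j$ as a new last coordinate produces a unique element of $\mathcal{L}^{(c)}_{n,k}$.

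This gives the disjoint union
\begin{equation*}
\mathcal{L}^{(c)}_{n,k} \;=\; \bigsqcup_{j=0}^{cn-1} \bigl\{(\ell',j) : \ell'\in \mathcal{L}^{(c)}_{n-1,k-j}\bigr\},
\end{equation*}
and taking cardinalities yields exactly the claimed identity. There is essentially no obstacle here: the only delicate point is the convention that $i_c(n-1,m)=0$ for $m$ outside the valid range, which is needed so that the sum from $j=0$ to $cn-1$ makes sense without truncation. Everything else is the direct bijection between $\mathcal{L}^{(c)}_{n,k}$ and pairs (shorter code, last entry), which is immediate from the definition of a colored Lehmer code.
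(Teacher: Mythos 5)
Your proof is correct and is essentially the same as the paper's: both invoke Theorem~\ref{thmcollehcode} and realize $\mathcal{L}^{(c)}_{n,k}$ as the disjoint union over the value $j$ of the last coordinate, with shorter codes in $\mathcal{L}^{(c)}_{n-1,k-j}$. Your version is slightly more careful in stating the vanishing convention for out-of-range indices, but the underlying bijection is identical.
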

\begin{proof}
Using Theorem~\ref{thmcollehcode}, we provide a combinatorial proof based on colored Lehmer codes. Given a colored Lehmer code $\ell = (\ell _1,\dots ,
\ell _{n-1}) \in \mathcal{L}^{(c)}_{n-1,m}$, we can construct a new code $\hat \ell = (\ell_1,\dots,\ell _{n-1},j)$ by just concatenating $j$ at the end. If $0\leq j\leq c\cdot n-1$, then  $\hat \ell \in \mathcal{L}^{(c)}_{n,m+j}$. Adding over all possible $0\leq j<c\cdot i$ and all possible colored Lehmer codes $\ell \in \mathcal{L}^{(c)}_{n-1,m}$, we obtain all elements of $\mathcal{L}^{(c)}_{n, k}$ for $k = m + j$. By changing the variable $m = k - j$, the result follows.  
\end{proof}

\begin{theorem}[Recurrence relation]
 For all $n>1$,  $c\geq 1$, and $0\leq k \leq (c-1) n + c\binom{n}{2}$, we have
$$i_c(n,k)=i_c(n,k-1) + i_c(n-1,k) - i_c(n-1,k-cn).$$
\end{theorem}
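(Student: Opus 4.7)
The plan is to derive this recurrence as an immediate telescoping consequence of the summation property just established (the previous theorem), which gives
$$i_c(n,k) = \sum_{j=0}^{cn-1} i_c(n-1, k-j).$$
First I would write down the same identity with $k$ replaced by $k-1$, obtaining
$$i_c(n,k-1) = \sum_{j=0}^{cn-1} i_c(n-1, k-1-j) = \sum_{s=1}^{cn} i_c(n-1, k-s),$$
after the index shift $s = j+1$.

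Next I would subtract these two expressions. The sums telescope: every term $i_c(n-1, k-s)$ with $1 \leq s \leq cn-1$ appears in both, so
$$i_c(n,k) - i_c(n,k-1) = i_c(n-1, k) - i_c(n-1, k-cn).$$
Rearranging yields the claimed recurrence. The only minor bookkeeping is to confirm that for boundary values of $k$ (where some of the quantities $i_c(n-1, k-s)$ vanish because $k-s < 0$ or exceeds the maximum inversion count $(c-1)(n-1) + c\binom{n-1}{2}$), the telescoping still holds; this is automatic once one adopts the standard convention that $i_c(n,k) = 0$ outside its support.

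As an alternative (and essentially equivalent) route, one can argue purely from generating functions: since $P_n(q) := \sum_k i_c(n,k)\,q^k = [c]_q[2c]_q \cdots [nc]_q$, we have
$$P_n(q) = \frac{1 - q^{cn}}{1-q}\, P_{n-1}(q),$$
so that $(1-q) P_n(q) = (1 - q^{cn}) P_{n-1}(q)$, and comparing coefficients of $q^k$ on both sides gives exactly the recurrence. I do not expect any genuine obstacle here; the entire content is a single telescoping (or equivalently a single polynomial identity), and no new combinatorial construction is needed beyond the summation property already proved.
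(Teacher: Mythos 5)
Your proof is correct and follows essentially the same route as the paper: apply the summation property to both $i_c(n,k)$ and $i_c(n,k-1)$, subtract, and telescope. The generating-function alternative and the remark about the vanishing convention outside the support are fine additions but not needed beyond what the paper already does.
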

\begin{proof}
    From the summation property, we have
  \begin{align*}
 i_c(n,k)-i_c(n,k-1)&=\sum_{j=0}^{cn-1} i_c(n-1,k-j)-\sum_{j=0}^{cn-1} i_c(n-1,k-1-j)\\
 &=i_c(n-1,k)-i_c(n-1,k-cn),
  \end{align*}
  which proves the recurrence.
 \end{proof}

We now present a generalization of the Knuth–Netto formula \cite{KN1, KN2} for computing the colored Mahonian numbers.

\begin{theorem}[Knuth-Netto formula]
For $0\leq k \leq n$, we have
$$i_c(n,k)=\binom{n+k-1}{k} + \sum_{\ell\geq 1}(-1)^j\binom{n + k - cu_j - cj - 1}{k - c u_j - cj} + \sum_{\ell\geq 1}(-1)^j\binom{n + k - cu_j- 1}{k - c u_j },$$
where $u_j=j(3j-1)/2$ is the $j$-th generalized pentagonal number.
\end{theorem}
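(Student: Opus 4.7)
The plan is to prove this by manipulating the generating function in \eqref{gfcolr} using Euler's pentagonal number theorem, mirroring the classical derivation of the Knuth--Netto formula. The right-hand side of \eqref{gfcolr} is
\[
\sum_{k\geq 0} i_c(n,k)\, q^k \;=\; \frac{\prod_{i=1}^{n}(1-q^{ci})}{(1-q)^n},
\]
so the strategy is to expand the numerator via pentagonal numbers, expand $1/(1-q)^n$ via the negative binomial series, and collect the coefficient of $q^k$.

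First I would observe that for $k\leq n$ (and in fact for $k\leq cn$) one may replace the finite product $\prod_{i=1}^{n}(1-q^{ci})$ by the infinite product $\prod_{i\geq 1}(1-q^{ci})$ without affecting the coefficient of $q^k$, since the omitted factors $(1-q^{c(n+1)}),(1-q^{c(n+2)}),\dots$ contribute only to degrees strictly greater than $cn\geq n\geq k$. Substituting $p=q^c$ in the Euler pentagonal number identity
\[
\prod_{i\geq 1}(1-p^i) \;=\; 1 + \sum_{j\geq 1}(-1)^j\bigl(p^{u_j} + p^{u_j+j}\bigr), \qquad u_j=\tfrac{j(3j-1)}{2},
\]
yields the expansion
\[
\prod_{i\geq 1}(1-q^{ci}) \;=\; 1 + \sum_{j\geq 1}(-1)^j\bigl(q^{cu_j} + q^{c(u_j+j)}\bigr).
\]
Next I would write $(1-q)^{-n}=\sum_{m\geq 0}\binom{n+m-1}{m}q^m$, multiply the two series, and read off the coefficient of $q^k$; the three pieces of the resulting sum correspond exactly to the $1$, the $q^{cu_j}$, and the $q^{c(u_j+j)}$ contributions, producing the three terms in the statement (with the convention that $\binom{n+k-1-r}{k-r}=0$ whenever $k<r$, which automatically truncates both sums to finitely many nonzero terms).

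The main subtlety is justifying the truncation step: one must verify that dropping $(1-q^{c(n+1)})(1-q^{c(n+2)})\cdots$ is harmless in the relevant degree range, which is precisely what forces the hypothesis $k\leq n$ in the statement. Beyond this, the argument is formal series manipulation, and the appearance of the generalized pentagonal numbers $u_j$ is forced by Euler's theorem, so no additional combinatorial input is needed. (A minor matter of presentation is that the summation index in the statement is printed as $\ell$ while the summand uses $j$; I would unify these as $j\geq 1$.)
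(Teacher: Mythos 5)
Your proposal follows essentially the same route as the paper: expand $1/(1-q)^n$ by the negative binomial series, apply Euler's pentagonal number theorem with $q\mapsto q^c$ to the numerator, and read off the coefficient of $q^k$. You are in fact more careful than the paper on the one genuine subtlety --- explicitly justifying that the finite product $\prod_{i=1}^{n}(1-q^{ci})$ may be replaced by the infinite product in degrees at most $k\leq n$ --- and you use the correct coefficient $\binom{n+m-1}{m}$ in the expansion of $(1-q)^{-n}$, where the paper's displayed series contains a typo.
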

\begin{proof}
From the generating function \eqref{gfcolr}, we have 
\begin{align*}
\sum_{k=0}^ni_c(n,k)q^k&=\prod_{i=1}^n\frac{1-q^{ic}}{1-q}\\
    &=\frac{1}{(1-q)^n}\prod_{i=1}^n(1-q^{ic})\\
    &=\prod_{i=1}^n(1-q^{ic})\sum_{\ell\geq 1}\binom{n+\ell+1}{\ell}q^{\ell}.
\end{align*}
Now, using Euler’s pentagonal number theorem (cf.\ \cite{Andrews}),  
$$\prod_{i=1}^{\infty}(1-q^i)=\sum_{k=-\infty}^{\infty}(-1)^kq^{u_k}=\sum_{k=-\infty}^{\infty}(-1)^k\left(q^{\frac{k(3k-1)}{2}}+q^{\frac{k(3k+1)}{2}}\right).$$
Substituting  $q$ by $q^c$, we have 
\begin{align*}
\sum_{k=0}^ni_c(n,k)q^k&=\prod_{i=1}^n(1-q^{ic})\sum_{\ell\geq 1}\binom{n+\ell+1}{\ell}q^{\ell}\\
&=\sum_{k=-\infty}^{\infty}(-1)^k\left(q^{\frac{ck(3k-1)}{2}}+q^{\frac{ck(3k+1)}{2}}\right)\sum_{\ell\geq 1}\binom{n+\ell+1}{\ell}q^{\ell}.
\end{align*}
Comparing coefficients of $q^k$ on both sides gives the result.
\end{proof}

In Theorem~\ref{MahC}, we express the colored Mahonian numbers in terms of the classical Mahonian numbers and a partition function. Let $p_{\leq i, j}(n)$ denote the number of integer partitions of $n$ into parts of size at most $i$, where each part appears at most $j$ times.

\begin{theorem}\label{MahC}
For $0\leq k\leq (c-1)n+c\binom{n}{2}$, we have
\[
i_{c}(n,k)=\sum_{j=0}^{k}i(n,j)p_{\leq n, c-1}(k-j).
\]
\end{theorem}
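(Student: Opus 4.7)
The plan is to derive this identity from the generating function \eqref{gfcolr} by factoring the product on the right in a way that isolates the classical Mahonian generating function $[n]_q! = \sum_{j} i(n,j) q^j$, leaving behind a factor that is recognizable as the generating function for the partition function $p_{\leq n, c-1}$.

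First I would rewrite
\[
\prod_{i=1}^n \frac{1-q^{ic}}{1-q} = \prod_{i=1}^n \frac{1-q^i}{1-q} \cdot \prod_{i=1}^n \frac{1-q^{ic}}{1-q^i} = [n]_q! \cdot \prod_{i=1}^n \left(1 + q^i + q^{2i} + \cdots + q^{(c-1)i}\right),
\]
using that $\frac{1-q^i}{1-q} = [i]_q$ and $\frac{1-q^{ic}}{1-q^i} = 1 + q^i + \cdots + q^{(c-1)i}$.

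Next, I would identify the second factor as the generating function for partitions whose parts lie in $\{1,2,\dots,n\}$ with each part used at most $c-1$ times. Indeed, the factor $1 + q^i + q^{2i} + \cdots + q^{(c-1)i}$ contributes $q^{ti}$ when the part $i$ is used exactly $t$ times, with $0 \leq t \leq c-1$, so
\[
\prod_{i=1}^n \left(1 + q^i + \cdots + q^{(c-1)i}\right) = \sum_{m \geq 0} p_{\leq n, c-1}(m) \, q^m.
\]

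Combining these two observations and using \eqref{gfcolr}, we obtain
\[
\sum_{k \geq 0} i_c(n,k)\, q^k = \left(\sum_{j \geq 0} i(n,j)\, q^j\right)\left(\sum_{m \geq 0} p_{\leq n, c-1}(m)\, q^m\right).
\]
Extracting the coefficient of $q^k$ on both sides via the Cauchy product yields $i_c(n,k) = \sum_{j=0}^{k} i(n,j)\, p_{\leq n, c-1}(k-j)$, as desired. There is no real obstacle here: the only subtlety is recognizing the correct factorization of $\prod_i \frac{1-q^{ic}}{1-q}$ that cleanly separates the Mahonian factor from a bounded-multiplicity partition factor, and this is immediate once one writes $1-q^{ic} = (1-q^i)(1 + q^i + \cdots + q^{(c-1)i})$.
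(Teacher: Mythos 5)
Your proof is correct and is essentially the same as the paper's first argument: the paper also factors $\prod_{i=1}^n[ci]_q$ as $[n]_q!\prod_{i=1}^n[c]_{q^i}$, interprets $\prod_{i=1}^n[c]_{q^i}$ as the generating function for partitions into parts at most $n$ with multiplicity at most $c-1$, and extracts the coefficient of $q^k$. (The paper additionally offers a second, bijective proof via colored Lehmer codes and the division algorithm $\ell_i = q_i\cdot i + r_i$, which your write-up does not need.)
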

\begin{proof}
The coefficient of $q^{b}$ in the expansion of $\prod_{i=1}^{n}\left[ c\right]
_{q^{i}}$ counts the number of partitions of $b$ into parts of size at most $n$, where each part appears at most $c-1$ times. Therefore, we obtain the generating function:
\begin{align*}
\sum_{k=0}^{(c-1) n + c\binom{n}{2}}i_c(n,k)q^k&=[n]_{q}!\prod_{i=1}^{n}
\left[ c\right] _{q^{i}}\\
&=\sum_{k\geq 0}i(n,k)q^k\sum_{k\geq 0}p_{\leq n, c-1}(k)q^k\\
&=\sum_{k\geq 0}\sum_{j=0}^ki(n,k)p_{\leq n, c-1}(k-j)q^k.
\end{align*}
Equating coefficients of $q^k$ on both sides yields the desired identity.

We also provide a second proof using colored Lehmer codes. Let $\ell=(\ell_1, \dots, \ell_n) \in \mathcal{L}^{(c)}_{n,k}$, where $0\leq \ell _i<c\cdot i$. By the division algorithm, there exist integers $q_i$ and $r_i$ such that $$\ell _i= q_i\cdot i+r_i, \quad 0\leq r_i<i, \quad 1\leq i \leq n.$$ 
Define the vectors $\bm{q}_\ell = (q_1,\dots, q_n)$ and $\bm{r}_\ell= (r_1,\dots, r_n)$. Observe that $0\leq q_i<c$ and $0\leq r_i<i$, which implies that $\bm{r}_\ell \in \mathcal{L}_{n,r_1+\cdots +r_n}$. Moreover, the sum  $1\cdot q_1+2\cdot q_2+\cdots +n\cdot q_n = s$ encodes the partition of  $s$ into parts of size at most $n$, with each part appearing at most $c-1$ times, due to the constraint $q_i<c$.
\end{proof}

\begin{remark}
From the partition-based interpretation, we recover the combinatorial meaning of colored Mahonian numbers in terms of tilings and lattice paths, as described by  Ghemit and Ahmia \cite{GH}. The colored Mahonian number $i_c(n,k)$ counts the number of tilings of a $(n+k) \times 1$ board using exactly $n$ green and $k$ orange squares, such that after placing $j$ green squares, there are at most $cj - 1$ consecutive orange squares. Equivalently, it counts the number of lattice paths from  \( (0,0) \) to \( (n,k) \) using north \((0,1)\) and east \((1,0)\) steps, with at most $cj-1$ north steps at level $j \geq 1$.
\end{remark}

\subsection{The number of colored inversion}

Remember that $I_{c,n}$ denotes the total number of colored inversion in $G_{c,n}$. Let $C_{c,j}=\left\{ \sigma \in G_{c,n}: \sigma(j)=n^{[r]}\right\}$.  It is clear that $|C_{c,j}|=c^{n-1}(n-1)!$. We can also
express $G_{c,n}$ as the disjoint union
\begin{equation}\label{cje}
G_{c,n}=\biguplus _{j=1}^nC_{c,j}.
\end{equation}

Let \[
\sigma = \left( 
\begin{array}{ccccc}
1 & \cdots & j & \cdots & n \\ 
\sigma(1) & \cdots & n^{[r]} & \cdots & \sigma(n)
\end{array}
\right) \in C_{c,j}.
\]
From the colored permutation $\sigma$, we define the colored permutation  $\tau^{(j)}$ by
\[
\tau^{(j)} =\left( 
\begin{array}{cccccc}
1 & \cdots  & j-1 & j &\cdots&n-1 \\ 
\sigma (1) & \cdots  & \sigma(j-1) & \sigma(j+1) & \cdots & \sigma(n) 
\end{array}%
\right) \in G_{c,n-1}.
\]%
From \eqref{maindef} and Lemma 4.7 of \cite{Ars2}, we conclude that 
\begin{equation}\label{rc}
\inv_c(\sigma)=\begin{cases}
\inv_c(\tau^{(j)}) + n - j , &\text{if} \ r=0, \\
\inv_c(\tau^{(j)})+n-j+c(j-1)+r, &\text{if}  \ r\neq 0.
\end{cases}
\end{equation}

It is clear that 
\begin{equation*}
I_{c,n}=\sum_{\sigma\in G_{c,n}}\inv_c(\sigma)=\sum_{j=1}^n\sum_{\sigma \in C_{c,j}}\inv_c(\sigma).
\end{equation*}

From~\eqref{rc}, we can derive the following recurrence relation for the sequence $I_{c,n}$. Additionally, a combinatorial argument based on colored Lehmer codes provides further insight.

\begin{theorem} For all $n\geq 2$, we have
\[
I_{c,n}=c^{n}n!\frac{(cn-1)}{2}+cnI_{c,n-1},
\]
with the initial condition $I_{c,1}=\binom{c}{2}$. 
\end{theorem}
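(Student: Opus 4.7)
The plan is to compute $I_{c,n}$ in two essentially equivalent ways and show that both recover the stated recurrence. The key observation is that, thanks to Theorem~\ref{thmcollehcode} (more precisely, the remark following it that $\inv_c$ and $\widetilde{\inv_c}$ are equidistributed on $G_{c,n}$), the total number of inversions equals the total weight of colored Lehmer codes:
\[
I_{c,n} \;=\; \sum_{\ell\in\mathcal{L}^{(c)}_{n}} (\ell_1+\ell_2+\cdots+\ell_n).
\]
Since the coordinates of a colored Lehmer code are independent, with $\ell_i$ ranging freely over $\{0,1,\dots,ci-1\}$, I will interchange the order of summation and evaluate, for each $i$,
\[
\sum_{\ell\in\mathcal{L}^{(c)}_{n}} \ell_i \;=\; \Bigl(\prod_{j\neq i} cj\Bigr)\cdot\sum_{v=0}^{ci-1}v \;=\; \frac{c^n n!}{ci}\cdot\frac{(ci-1)ci}{2} \;=\; \frac{c^n n!\,(ci-1)}{2}.
\]
Summing over $i$ and isolating the $i=n$ term gives
\[
I_{c,n} \;=\; \frac{c^n n!(cn-1)}{2} \;+\; \frac{c^n n!}{2}\sum_{i=1}^{n-1}(ci-1).
\]
Since $c^n n! = cn\cdot c^{n-1}(n-1)!$, the second sum is precisely $cn\cdot I_{c,n-1}$, yielding the recurrence. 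The initial condition $I_{c,1}=\sum_{v=0}^{c-1}v=\binom{c}{2}$ is immediate.

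For a second (more algebraic) derivation in the spirit of the paragraph preceding the theorem, I would partition $G_{c,n}$ according to \eqref{cje} and, within each $C_{c,j}$, stratify further by the color $r$ of $\sigma(j)=n^{[r]}$, obtaining $c$ subclasses each of size $c^{n-1}(n-1)!$ that are in bijection with $G_{c,n-1}$ via $\sigma\mapsto \tau^{(j)}$. Applying \eqref{rc}, the sum of $\inv_c(\sigma)$ over each such subclass equals $I_{c,n-1}$ plus a constant contribution depending on $j$ and $r$. Summing $r$ from $0$ to $c-1$ and then $j$ from $1$ to $n$, the total ``constant'' contribution collapses to $c^{n-1}(n-1)!\cdot \tfrac{cn(cn-1)}{2}$, while the $I_{c,n-1}$ terms collect to $cn\,I_{c,n-1}$.

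The main obstacle in the first approach is purely notational (keeping the factor $c^n n!/(ci)$ straight), and in the second approach it is the clean arithmetic simplification
\[
\sum_{j=1}^{n}\Bigl[c(n-j)+c(c-1)(j-1)+\tfrac{c(c-1)}{2}\Bigr] \;=\; \frac{cn(cn-1)}{2},
\]
which is the only nontrivial calculation; everything else is bookkeeping. I would present the Lehmer-code proof as the main argument because it sidesteps the case split in \eqref{rc} entirely, and mention the second derivation as the combinatorial counterpart promised in the text.
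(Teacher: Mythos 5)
Your proposal is correct and matches the paper, which likewise gives both arguments: the cell decomposition of $G_{c,n}$ via \eqref{cje} and \eqref{rc}, and a combinatorial proof via colored Lehmer codes. The only cosmetic difference is that in the Lehmer-code argument you sum coordinate-by-coordinate (the device the paper reserves for the closed form $I_{c,n}=\tfrac{c^nn!}{2}\bigl(c\binom{n+1}{2}-n\bigr)$) and then recognize the first $n-1$ terms as $cn\,I_{c,n-1}$, whereas the paper peels off only the last coordinate $\ell_n\in\{0,\dots,cn-1\}$; the two computations are immediately equivalent.
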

\begin{proof}
It is clear that $G_{c,1}=\{1,\overline{1},\dots,1^{[c-1]}\}$, so we obtain   $I_{c,1}=0+1+\cdots+c-1=\binom{c}{2}$. Now, suppose $n\geq 2$. For each $j\in \Sigma_{c,n}$, we have
\begin{align*}
I_{c,n}&=\sum_{j=1}^n\sum_{\sigma \in C_{c,j}}\inv_c(\sigma)\\&= \sum_{j=1}^n\sum_{\substack{\tau^{(j)} \in G_{c,n-1}\\ \col(\sigma_j)=0}}\left(\inv_c(\tau^{(j)})+n-j\right) + \sum_{j=1}^n\sum_{\substack{\tau^{(j)} \in G_{c,n-1}\\ \col(\sigma_j)=r\neq0}}\left(\inv_c(\tau^{(j)})+n-j+c(j-1)+r\right)\\
&= c^{n-1}(n-1)!\sum_{j=1}^n(n-j)+\sum_{j=1}^n\sum_{\substack{\tau^{(j)} \in G_{c,n-1}\\ \col(\sigma_j)=0}}\inv_c(\tau^{(j)}) \\
& +c^{n-1}(n-1)!\sum_{j=1}^n\sum_{r=1}^{c-1}(n-j + c(j-1)+r) + \sum_{j=1}^n\sum_{\substack{\tau^{(j)} \in G_{c,n-1}\\ \col(\sigma_j)=r\neq0}}\inv_c(\tau^{(j)}).
\end{align*}
Simplifying the above sums, we obtain  
\begin{align*}
I_{c,n}&= c^{n-1}(n-1)!\left(\frac{n(n-1)}{2} + \frac{(c-1)n(n + c n - 1)}{2}\right) + \sum_{j=1}^n\sum_{r=0}^{c-1}\sum_{\substack{\tau^{(j)} \in G_{c,n-1}\\\col(\sigma_j)=r}}\inv_c(\tau^{(j)}) \\
&=c^nn!\frac{(cn-1)}{2} +  \sum_{j=1}^n\sum_{r=0}^{c-1}\sum_{\sigma \in G_{c,n-1}}\inv_c(\sigma) \\
&=c^nn!\frac{(cn-1)}{2} + cnI_{c,n-1}.  &\qedhere
\end{align*}
\end{proof}
\begin{proof}[Combinatorial proof.]
Theorem~\ref{thmcollehcode} implies that $$ I_{c,n} = \sum _{\ell \in \mathcal{L}^{(c)}_{n}}\sum _{i = 1}^n \ell _i.$$ 
Since $0\leq \ell_n<cn$, we can rewrite this sum as follows:

\begin{align*}
    I_{c,n} &= \sum _{\ell \in \mathcal{L}^{(c)}_{n}}\sum _{i = 1}^n \ell _i= \sum _{\ell \in \mathcal{L}^{(c)}_{n-1}}\sum _{j = 0}^{cn-1}((\ell _1+\ell _2+\cdots +\ell _{n-1})+j)\\
    &= \sum _{\ell \in \mathcal{L}^{(c)}_{n-1}}cn(\ell _1+\ell _2+\cdots +\ell _{n-1})+\sum _{j = 0}^{cn-1}j= cnI_{c,n-1}+\sum _{\ell \in \mathcal{L}^{(c)}_{n-1}}\binom{cn}{2}\\
    &=cnI_{c,n-1}+c^{n-1}(n-1)!\frac{cn (cn-1)}{2} = cnI_{c,n-1}+c^{n}n!\frac{(cn-1)}{2}. & \qedhere
\end{align*}
\end{proof}

\begin{theorem}\label{eqIcn} For all $c\geq 1$ and $n\geq 0$, we have
    $$I_{c,n}=\frac{c^nn!}{2}\left (c\binom{n+1}{2}-n\right ).$$
\end{theorem}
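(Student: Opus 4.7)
The plan is to give a short proof by induction on $n$, using the recurrence $I_{c,n} = c^n n!\frac{(cn-1)}{2} + cn\, I_{c,n-1}$ established just above, with base case $I_{c,1}=\binom{c}{2}$. The base case matches the claimed closed form, since $\frac{c\cdot 1!}{2}(c\binom{2}{2}-1) = \frac{c(c-1)}{2}=\binom{c}{2}$.

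For the inductive step, I would assume the formula at $n-1$ and compute
\[
cn\, I_{c,n-1} = \frac{c^n n!}{2}\left(c\binom{n}{2}-(n-1)\right),
\]
then add $c^n n!(cn-1)/2$ to get
\[
I_{c,n} = \frac{c^n n!}{2}\left(c\binom{n}{2}-(n-1)+(cn-1)\right)
      = \frac{c^n n!}{2}\left(c\binom{n}{2}+cn-n\right).
\]
The identity $\binom{n}{2}+n=\binom{n+1}{2}$ then gives the desired expression. No step looks delicate; the only thing to be careful about is the bookkeeping of the two $-1$ terms combining into $-n$.

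As an alternative (and perhaps more illuminating) route, I would present a one-line proof via colored Lehmer codes: by Theorem \ref{thmcollehcode},
\[
I_{c,n} \;=\; \sum_{\ell\in\mathcal{L}^{(c)}_n}\sum_{i=1}^n \ell_i \;=\; \sum_{i=1}^n\Bigl(\prod_{j\neq i}(cj)\Bigr)\sum_{k=0}^{ci-1}k \;=\; \sum_{i=1}^n \frac{c^n n!}{ci}\cdot\frac{ci(ci-1)}{2}.
\]
Simplifying and using $\sum_{i=1}^n i = \binom{n+1}{2}$ yields $I_{c,n}=\frac{c^n n!}{2}\bigl(c\binom{n+1}{2}-n\bigr)$ directly, without induction. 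This second approach is cleaner because it exploits the independence of the coordinates of a colored Lehmer code, reducing the computation to the mean of $\{0,1,\dots,ci-1\}$ for each $i$.

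I would therefore state the theorem and present the inductive proof as the main argument (since the recurrence is already proved), and optionally append the Lehmer-code computation as a second proof in the same spirit as the combinatorial proof given for the preceding theorem. I expect no real obstacle: the entire content is a one-variable summation identity, and the only place to slip is confusing $\binom{n}{2}$ with $\binom{n+1}{2}$ when collecting the linear-in-$n$ terms.
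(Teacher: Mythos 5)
Your proposal is correct. Both of your computations check out: the inductive step correctly combines $c\binom{n}{2}-(n-1)$ with $cn-1$ to give $c\binom{n+1}{2}-n$, and the base case $I_{c,1}=\binom{c}{2}$ matches the closed form (the $n=0$ case is trivially $0=0$, worth a half-sentence if the theorem is stated for $n\ge 0$). The comparison with the paper is the following: what you present as the ``optional alternative'' --- summing $\ell_i$ coordinate by coordinate over all colored Lehmer codes, using $\sum_{\ell\in\mathcal{L}^{(c)}_n}\ell_i=\frac{c^n n!}{ci}\binom{ci}{2}$ --- is in fact the paper's primary proof, essentially verbatim. Your lead argument, induction on $n$ via the recurrence $I_{c,n}=c^n n!\frac{cn-1}{2}+cn\,I_{c,n-1}$, is a route the paper does not take; the paper's second proof instead differentiates the product formula $\sum_{\sigma}q^{\inv_c(\sigma)}=[c]_q[2c]_q\cdots[cn]_q$ and sets $q=1$. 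Your inductive route is perfectly valid given that the recurrence is established immediately beforehand, but it is logically weaker as a standalone argument (it inherits the recurrence as a prerequisite), whereas the Lehmer-code sum and the $q$-derivative each give the closed form in one self-contained step. If you present both of your arguments, I would lead with the Lehmer-code computation and relegate the induction to a remark.
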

\begin{proof}
Using Theorem~\ref{thmcollehcode}, we can sum over each component of the colored Lehmer codes as follows:
\begin{align*}
    I_{c,n} &= \sum _{i = 1}^n \sum _{\ell \in \mathcal{L}_{n}^{(c)}}\ell _i= \sum _{k = 1}^n \frac{c^nn!}{ck}\sum _{i = 0}^{ck-1}i\\
    &= \sum _{k = 1}^n \frac{c^nn!}{ck}\binom{ck}{2} = \frac{c^nn!}{2}\left (c\binom{n+1}{2}-n\right).
\end{align*}
The second equality follows from the  condition on the components of a colored Lehmer code, namely, that   $0\leq \ell _i < c\cdot i$.
\end{proof}

\begin{proof}[Second proof]
 Equation \eqref{qidentity1} can also be used to derive an explicit expression  for  $I_{c,n}$ by differentiating the left-hand side and setting $q=1$. Applying the generalized Leibniz rule, the right-hand side simplifies to
    $$\sum _{k = 1}^n\frac{[c]_q\cdots [cn]_q}{[ck]_q}\partial _q [ck]_q = \sum _{k = 1}^n\frac{[c]_q\cdots [cn]_q}{[ck]_q}\left (\sum _{i = 1}^{ck-1}i\cdot q^{i-1} \right ).$$
Setting $q=1$, we obtain
    $$I_{c,n} = \sum _{k = 1}^n\frac{c^n\cdot n!}{ck}\binom{ck}{2} = \frac{c^nn!}{2}\left (c\binom{n+1}{2}-n\right ).$$
\end{proof}

\begin{corollary} For all $n\geq 2$ and $c\geq 1$
    $$I_{c,n}=\frac{c n^2 (c n+c-2)}{(n-1) (c n-2)}I_{c,n-1}.$$
\end{corollary}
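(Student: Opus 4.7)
The plan is to derive this corollary as a direct consequence of Theorem~\ref{eqIcn}, which gives the closed form $I_{c,n} = \tfrac{c^n n!}{2}\bigl(c\binom{n+1}{2}-n\bigr)$. The strategy is simply to compute the ratio $I_{c,n}/I_{c,n-1}$ and simplify.

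First I would rewrite the polynomial factor in a form that factors nicely. Noting that
$$c\binom{n+1}{2} - n = \frac{cn(n+1)}{2} - n = \frac{n(cn+c-2)}{2},$$
Theorem~\ref{eqIcn} becomes $I_{c,n} = \tfrac{c^n n! \cdot n(cn+c-2)}{4}$. Applying the same identity with $n$ replaced by $n-1$, and using
$$c\binom{n}{2}-(n-1) = \frac{(n-1)(cn-2)}{2},$$
yields $I_{c,n-1} = \tfrac{c^{n-1}(n-1)!\,(n-1)(cn-2)}{4}$.

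Next I would divide these two expressions. The factors of $4$ cancel, $c^n/c^{n-1}=c$, and $n!/(n-1)! = n$, producing
$$\frac{I_{c,n}}{I_{c,n-1}} = \frac{c\cdot n \cdot n(cn+c-2)}{(n-1)(cn-2)} = \frac{cn^2(cn+c-2)}{(n-1)(cn-2)},$$
which is the claimed recurrence. One only needs $n\geq 2$ so that the denominator $(n-1)(cn-2)$ is nonzero (for $c=1$, $n=2$ one has $cn-2=0$, so a brief sanity check may be warranted, but for $c\geq 1$ and $n\geq 2$ the formula holds whenever $I_{c,n-1}\neq 0$).

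There is no real obstacle here; the work is entirely mechanical simplification of the closed form. The only mild care needed is in factoring $c\binom{n+1}{2}-n$ and $c\binom{n}{2}-(n-1)$ to expose the shared structure that makes the ratio collapse to the stated rational function.
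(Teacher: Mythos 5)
Your proof is correct and is exactly the intended argument: the paper states this corollary without proof as an immediate consequence of Theorem~\ref{eqIcn}, and your computation of the ratio $I_{c,n}/I_{c,n-1}$ after factoring $c\binom{n+1}{2}-n=\tfrac{n(cn+c-2)}{2}$ is the right one. Your caveat about the degenerate case $c=1$, $n=2$ is also well taken --- there $cn-2=0$ and $I_{1,1}=0$, so the stated recurrence is formally $0/0$ and the paper's formulation silently overlooks this.
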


\section{Inversion in Colored Derangements}

An element $\sigma_1^{\col(\sigma_1)}\sigma_2^{\col(\sigma_2)}\cdots \sigma_n^{\col(\sigma_n)}$  of $G_{c,n}$ is called a  \emph{derangement}  if $\sigma_i\neq i$ for all  $i\in \Sigma$, meaning $\sigma$ has no fixed points.  Let  $\D_n^{(c)}$ denote the set of all derangements of  $G_{c,n}$.
For example,  the colored  permutation $5\,\overline{6}\,\overline{\overline{3}}\,2 \, \overline{4} \,  \overline{1}$ is a derangement in $\D_{6}^{(3)}$. 

Faliharimalala and Zeng \cite{FZ} (see also \cite{Assaf}) established the explicit formula
\begin{align*}
d_{n}^{(c)}:=|\D_n^{(c)}|=n!\sum_{k=0}^n\frac{(-1)^kc^{n-k}}{k!},
\end{align*}
as well as the recurrence relation  
\begin{align*}
d_{n+1}^{(c)}=(cn+c)d_n^{(c)}+(-1)^{n+1}, \quad n\geq 0.
\end{align*}
We refer to $d_{n}^{(c)}$  as the  \emph{colored derangement numbers}. Further properties of these numbers were studied by  Mez\H{o} and Ram\'irez \cite{MR};  see also a further generalization in \cite{Moll}.

Let $t_{n}^{(c)}$ denote the total number of inversions in all colored derangements of size $n$. In  the classical case $c=1$, Alekseyev \cite{Alek} (see also sequence \seqnum{A216239} in \cite{OEIS}) provided  explicit expressions for this sequence. To generalize this result to the colored setting, we will present a proof based on the Principle of Inclusion-Exclusion (PIE).

\begin{theorem}\label{theo41} For $n\geq 0$, the total number of inversions in all classical derangements of size $n$ is given by
    $$t_{n}^{(1)}=\frac{n!}{12}\sum_{k=0}^{n-1}(-1)^k\frac{(3n+k)(n-k-1)}{k!}.$$
\end{theorem}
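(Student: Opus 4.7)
The plan is to apply the Principle of Inclusion--Exclusion (PIE) on the fixed-point sets of permutations. For $S \subseteq [n]$, let $A_S = \{\sigma \in \Sym_n : \sigma(i)=i \text{ for all } i \in S\}$, so $|A_S| = (n-|S|)!$. The standard PIE identity for derangements gives
\[
t_n^{(1)} = \sum_{\sigma \in \D_n^{(1)}} \inv(\sigma) = \sum_{S \subseteq [n]} (-1)^{|S|} \sum_{\sigma \in A_S} \inv(\sigma),
\]
so the task reduces to evaluating $I(S) := \sum_{\sigma \in A_S}\inv(\sigma)$ for each $S$.

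To compute $I(S)$, set $m := n-|S|$ and partition the pairs $(i,j)$ with $i<j$ into three types. Pairs with both coordinates in $S$ are never inverted since $\sigma$ fixes $S$. For pairs inside $[n]\setminus S$, symmetry forces exactly half of the $m!$ permutations to invert each such pair, contributing $\binom{m}{2}m!/2$ in total. For a mixed pair involving $s \in S$ and $t \in [n]\setminus S$, set $L_s = \{t \in [n]\setminus S : t<s\}$ and $R_s = \{t \in [n]\setminus S : t>s\}$; a short case analysis (according as $s<t$ with $\sigma(t)\in L_s$, or $s>t$ with $\sigma(t)\in R_s$) shows that the mixed contributions at $s$, summed over $A_S$, equal $2|L_s||R_s|(m-1)!$. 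Hence
\[
I(S) = \frac{m!\binom{m}{2}}{2} + 2(m-1)! \sum_{s \in S} |L_s|(m - |L_s|).
\]

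Substituting back splits $t_n^{(1)} = T_1 + T_2$ along the two terms. The first, $T_1$, collapses via $\binom{n}{k}(n-k)! = n!/k!$ to $\frac{n!}{4}\sum_{k}(-1)^k(n-k)(n-k-1)/k!$. The heart of the argument, and the main obstacle, is $T_2$, since $|L_s|$ depends on both $s$ and $S$. To handle it I would swap the order of summation: for each fixed $s$, decompose $S\setminus\{s\}$ by $j := |S\cap [s-1]|$, which gives $|L_s| = s-1-j$ with multiplicity $\binom{s-1}{j}\binom{n-s}{k-1-j}$. Two absorption identities,
\[
(s-1-j)\binom{s-1}{j}=(s-1)\binom{s-2}{j},\qquad (n-s-(k-1-j))\binom{n-s}{k-1-j}=(n-s)\binom{n-s-1}{k-1-j},
\]
followed by Vandermonde's convolution collapse the inner $j$-sum to $(s-1)(n-s)\binom{n-3}{k-1}$. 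Since $\sum_{s=1}^n(s-1)(n-s)=\binom{n}{3}$ (counted by triples $i<s<j$), it follows that $\sum_{|S|=k}\sum_{s\in S}|L_s|(m-|L_s|) = \binom{n}{3}\binom{n-3}{k-1}$. Together with $\binom{n}{3}(n-3)! = n!/6$ and a shift of summation index, this yields $T_2 = \frac{n!}{3}\sum_k(-1)^k k(n-k-1)/k!$.

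The final step is to combine $T_1$ and $T_2$ under a common denominator: the bracket $\frac{n-k}{4}+\frac{k}{3}=\frac{3(n-k)+4k}{12}=\frac{3n+k}{12}$ matches exactly the factor appearing in the theorem, and the upper limit can be taken as $n-1$ since the $k=n-1$ summand carries a zero factor. I expect the absorption-plus-Vandermonde collapse to be the technical crux; once $\binom{n-3}{k-1}$ appears and the outer sum reduces to $\binom{n}{3}$, the remaining steps are routine algebraic simplification, and the formula follows.
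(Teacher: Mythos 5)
Your proposal is correct and follows essentially the same route as the paper: inclusion--exclusion over the set of fixed points, with $I(S)$ split into the $\binom{m}{2}m!/2$ contribution from pairs of non-fixed elements and the $2|L_s|(m-|L_s|)(m-1)!$ contribution from mixed pairs at each fixed point $s$, the latter collapsed by a Vandermonde convolution to produce the $\binom{n}{3}\binom{n-3}{k-1}$ (equivalently the paper's $\binom{n}{k-1}\sum_j j(n-k-j)$) term. The only difference is cosmetic — you swap the summation order and apply absorption identities before Vandermonde, whereas the paper substitutes $j=p_\ell-\ell$ and invokes identity (5.26) of \emph{Concrete Mathematics} — and both arguments rely on the same boundary observation that the $k=n-1$ and $k=n$ terms vanish combinatorially, so truncating the sum at $n-1$ is legitimate.
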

\begin{proof}
We apply the Principle of Inclusion-Exclusion similarly to the classical case of derangements. Suppose we fix  exactly \( k \) fixed points, say \( p_1, p_2, \dots, p_k \) with \( p_1 < p_2 < \cdots < p_k \). These can be chosen in \( \binom{n}{k} \) ways.

Now, let us count the total number of inversions before fixing these $k$ elements, that is the inversions independent to the $p_i$'s. Since there are $n-k$ free positions remaining, we apply the formula from  Theorem~\ref{eqIcn} with $c=1$ to determine the number of inversions over all permutations of size $n-k$.  This yields
\[
\frac{(n-k)!}{2} \left(\binom{n-k+1}{2} - (n-k) \right) = \frac{(n-k)!(n-k)(n-k-1)}{4}.
\]
Each choice of $k$ fixed positions contributes the same number of inversions, and there are $\binom{n}{k}$ such choices. Thus, the total number of inversions associated with all permutations fixing exactly \( k \) positions is given by
$$a(n,k):=\binom{n}{k}\frac{(n-k)!(n-k)(n-k-1)}{4}=\frac{(n-k) (n- k-1) n!}{4k!}.$$ 


Next, we need to count the inversions relative to the fixed elements. Consider a fixed point \( p_\ell \). The number of positions before \( p_\ell \) that are not fixed points is  \( p_\ell - \ell \). Additionally, there are \( n - k - (p_\ell - \ell) \) elements that are not fixed points but are greater than \( p_\ell \).  Thus, any of these larger elements appearing in the first \( p_\ell - \ell \) free positions contribute to the inversion count, leading to $(p_\ell - \ell)(n - k - (p_\ell - \ell))$ new inversions. Similarly, after \( p_\ell \), there are \( (n - p_\ell) - (k - \ell) \) positions that are not fixed points, and there are \( p_\ell - \ell \) elements that are not fixed points and are smaller than \( p_\ell \). This results in the following additional inversions $((n - p_\ell) - (k - \ell))(p_\ell - \ell)$.

Finally, the remaining elements that are not fixed points can be arranged in \( (n-k-1)! \) different ways. Summarizing, we obtain:
\begin{multline*}
    \left ((p_\ell - \ell)(n - k - (p_\ell - \ell)) + ((n - p_\ell) - (k - \ell))(p_\ell - \ell)\right ) (n-k-1)! \\= 2(n-k-1)! (p_\ell - \ell)(n-k-p_\ell+\ell).
\end{multline*}

By the Addition Principle, we obtain:
 \begin{align*}
b(n,k):=&2(n-k-1)!\sum_{1\leq p_1<p_2<\cdots< p_k}\sum_{\ell=1}^k(p_\ell-\ell)(n-k-p_\ell+\ell)\\
=&2(n-k-1)!\sum_{\ell = 1}^k \sum _{p_{\ell}=\ell}^{n-(k-\ell)}\binom{p_{\ell}-1}{\ell -1}\binom{n-p_{\ell}}{k-\ell}(p_{\ell}-\ell)(n-k-p_{\ell}+\ell)\\
=&2(n-k-1)!\sum_{\ell = 1}^k \sum _{p_{\ell}=\ell}^{n-(k-\ell)}\binom{p_{\ell}-1}{p_{\ell}-\ell}\binom{n-p_{\ell}}{n-k-(p_{\ell}-\ell)}(p_{\ell}-\ell)(n-k-p_{\ell}+\ell)\\
=&2(n-k-1)!\sum_{j = 0}^{n-k} \sum _{p_{\ell}=1}^{n}\binom{p_{\ell}-1}{j}\binom{n-p_{\ell}}{n-k-j}j(n-k-j)\\
=&2(n-k-1)!\sum_{j = 0}^{n-k}j(n-k-j) \sum _{p_{\ell}=1}^{n}\binom{p_{\ell}-1}{j}\binom{n-p_{\ell}}{n-k-j}\\
=&2(n-k-1)!\sum_{j = 0}^{n-k}j(n-k-j) \binom{n}{k-1}.
\end{align*}

In the last equality we use the identity (5.26) of \cite{CM}.  Simplifying the last sum, we obtain 
$$b(n,k)=\frac{(n-k-1)n!}{3(k-1)!}.$$
By the PIE, we have
\begin{align*}
 t_n^{(1)}=\frac{1}{4} (n-1) n n! -   \sum_{k=1}^{n-1}(-1)^{k-1}(a(n,k) + b(n,k)).
\end{align*}
Substituting the expression for $a(n,k)$ and $b(n,k)$ we obtain the desired result. 
\end{proof}

In the following theorem, we generalize the previous proof to the colored case and provide an explicit formula for the sequence $t_n^{(c)}$, which counts the total number of inversions in all colored derangements of size $n$.

\begin{theorem}
For all  $n \geq 0$ and $c \geq 1$, we have 
    \begin{align*}
  t_n^{(c)}= & \frac{n!}{12}\sum _{k = 0}^{n-1}\left (-1\right )^k\frac{c^{n-k}(n-k-1)(3n+k)}{k!}\\
    &+ n!\sum _{k = 0}^n\frac{(-1)^k}{k!}\sum _{i = 0}^{(n-k)(c-1)}i\sum _{j = 0}^{n-k}(-1)^j\binom{n-k}{j}\binom{i-cj+n-k-1}{n-k-1}\\
    &+\frac{n!(c-1)}{2}\sum _{k = 0}^n\frac{(-1)^kc^{n-k}}{k!}\binom{n-k}{2}\\
    &+\frac{n!(c-1)}{6}\sum _{k = 1}^{n-1}\frac{(-1)^kc^{n-k}(2(n-k)+1)}{(k-1)!}.
\end{align*}
\end{theorem}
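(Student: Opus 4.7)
The plan is to adapt the inclusion-exclusion argument from Theorem~\ref{theo41} to the colored setting. A colored derangement is a $\sigma\in G_{c,n}$ with no position $i$ satisfying both $\sigma_i=i$ and $\col(\sigma_i)=0$, so by the principle of inclusion-exclusion
\[
t_n^{(c)} = \sum_{k=0}^{n}(-1)^k \sum_{\substack{S\subset[n]\\|S|=k}}\ \sum_{\substack{\sigma\in G_{c,n}\\ \sigma_p=p^{[0]}\text{ for }p\in S}}\inv_c(\sigma).
\]
The key step is to decompose $\inv_c(\sigma) = \inv(|\sigma|)+\col(\sigma)+c\cdot A(\sigma)$, with $A(\sigma) = \sum_{\col(\sigma_j)\neq 0}|\{i<j:\sigma_i<\sigma_j\}|$, and to analyze each summand based on whether the involved positions are fixed or free. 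Since fixed points appear in increasing order with color zero, contributions confined to fixed positions vanish, leaving four surviving pieces that correspond exactly to $T_1, T_2, T_3, T_4$ in the claimed formula.

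For $\inv(|\sigma|)$, the colors are irrelevant, so the inner sum equals $c^{n-k}$ times the uncolored total $a(n,k)+b(n,k)$ computed in the proof of Theorem~\ref{theo41}; assembling via the outer PIE yields $T_1$. For $\col(\sigma)$, only free positions carry a color, and the number of colorings whose colors sum to a prescribed $i$ equals the number of compositions of $i$ into $n-k$ parts each less than $c$, which admits the inner PIE expansion visible in $T_2$; multiplying by $(n-k)!$ arrangements of free elements and applying the outer PIE produces $T_2$. For $c\cdot A$, I would split pairs into $A_2$ (both positions free) and $A_3$ (a fixed position preceding a free one). Counting $A_2$ uses that out of $(n-k)!\,c^{n-k}$ colored arrangements on the free positions, exactly $\tfrac{(n-k)!}{2}(c-1)c^{n-k-1}$ satisfy both $\sigma_i<\sigma_j$ and $\col(\sigma_j)\neq 0$ for a given pair; summed over the $\binom{n-k}{2}$ free pairs and via PIE this yields $T_3$.

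The most delicate piece is $A_3$. Setting $m_\ell = n-p_\ell-(k-\ell)$ for the number of free values strictly greater than the $\ell$-th fixed point $p_\ell$, the contribution of $A_3$ from $p_\ell$ summed over arrangements and colorings is $m_\ell^2(n-k-1)!(c-1)c^{n-k-1}$, so $T_4$ reduces to evaluating $B(n,k)=\sum_{p_1<\cdots<p_k}\sum_{\ell=1}^{k}m_\ell^2$. I would reindex by $j=p_\ell-\ell$ so that $m_\ell=n-k-j$, rewriting
\[
B(n,k) = \sum_{j=0}^{n-k}(n-k-j)^2\sum_{p=j+1}^{j+k}\binom{p-1}{j}\binom{n-p}{n-k-j},
\]
and then apply the binomial identity $\sum_{p=1}^{n}\binom{p-1}{j}\binom{n-p}{n-k-j}=\binom{n}{k-1}$ already used in Theorem~\ref{theo41} (the summand vanishes outside $p\in[j+1, j+k]$). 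The inner sum collapses to $\binom{n}{k-1}$ independently of $j$, the outer sum of squares evaluates to $\frac{(n-k)(n-k+1)(2(n-k)+1)}{6}$, and the simplification $\binom{n}{k-1}(n-k-1)!(n-k)(n-k+1)=\frac{n!}{(k-1)!}$ combined with the PIE sign produces $T_4$. The main obstacle is that $m_\ell^2$, unlike the linear $m_\ell$ in Theorem~\ref{theo41}, does not collapse directly under the binomial identity; decoupling the squared factor from the inner binomial sum via the reindexing step is the essential technical move.
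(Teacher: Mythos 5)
Your proposal is correct and follows essentially the same route as the paper: the same inclusion--exclusion over fixed points of color zero, the same three-way decomposition of $\inv_c$, the same split of the third summand into free--free and fixed--free pairs, and the same reindexing $j=p_\ell-\ell$ with the Vandermonde-type identity $\sum_{p}\binom{p-1}{j}\binom{n-p}{n-k-j}=\binom{n}{k-1}$ to collapse the $m_\ell^2$ sum. No substantive differences to report.
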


\begin{proof}
 A colored derangement is a colored permutation such that, if the underlying permutation has a fixed point, then it must have a nonzero color. This observation implies that the same inclusion-exclusion argument used in the proof of Theorem~\ref{theo41} works under the additional assumption that the color of each fixed point $p_i$ is $0$.
 
 From \eqref{maindef}, we can decompose the sequence $t_{n}^{(c)}$ as $t_{n}^{(c)}= A+B+C$, where $A$ deals with  $\inv(|\sigma|)$, $B$ corresponds to the contribution from the coloring statistic $\col(\sigma)$, and $C$ captures the contribution from \[
 c\sum _{\substack{i<j, \, \sigma _i <\sigma _j\\c_j\neq 0}}1.
\]  
  
For the summand $A$, we follow a similar approach as in Theorem~\ref{theo41}. The key observation is that the inversion count of the underlying permutation is independent of the coloring. Hence, for each choice of $k$ fixed points, the remaining $n-k$ non-fixed elements can be colored in $c^{n-k}$ ways.
Therefore, we obtain 
$$A = \frac{n!}{12}\sum _{k = 0}^{n-2}\left (-1\right )^k\frac{c^{n-k}(n-k-1)(3n+k)}{k!}.$$

 For the summand $B$, we observe that the underlying permutation is independent of the coloring, so we only need to keep track of the number of elements assigned to each color. To do this, we consider integer compositions. Let $\mathcal{C}_{n,k,<c}$ denote the set of integer compositions of $n$ into $k$ non-negative parts, where each part is less than $c$. By  the inclusion-exclusion formula and the stars and bars method, it is known that 
  $$|\mathcal{C}_{n,k,<c}|=\sum _{j = 0}^{k}(-1)^j\binom{k}{j}\binom{n-cj+k-1}{k-1}.$$
We now compute $B$ as follows:
\begin{align*}
    B &= \sum _{k = 0}^n(-1)^k\sum _{p_1<\cdots <p_k}\sum _{\substack{\sigma\in G_{c,n}\\\sigma (p_i)=p_i, \,i\in [n]}}\texttt{col}(\sigma)\\
    &=\sum _{k = 0}^n(-1)^k(n-k)!\binom{n}{k}\sum _{i = 0}^{(n-k)(c-1)}i\left | \mathcal{C}_{i,n-k,<c}\right |\\
    &=\sum _{k = 0}^n(-1)^k(n-k)!\binom{n}{k}\sum _{i = 0}^{(n-k)(c-1)}i \sum _{j = 0}^{n-k}(-1)^j\binom{n-k}{j}\binom{i-cj+n-k-1}{n-k-1}\\
    &=n!\sum _{k = 0}^n\frac{(-1)^k}{k!}\sum _{i = 0}^{(n-k)(c-1)}i \sum _{j = 0}^{n-k}(-1)^j\binom{n-k}{j}\binom{i-cj+n-k-1}{n-k-1},
\end{align*}
where the second sum corresponds to the choice of fixed points $p_1, \dots, p_k$ in $\binom{n}{k}$ ways, the remaining $n-k$ elements are permuted in $(n-k)!$ ways, and then adding over integer compositions for the remaining $n-k$ positions that represents the sum of the colors parametrized by $i$.

Finally, for the third summand $C$, we have
$$C = \sum _{k = 0}^n(-1)^k\sum _{p_1<\cdots <p_k}\sum _{\substack{\sigma\in G_{c,n}\\\sigma (p_i)=p_i,\, i\in [n]}}c\sum _{\substack{i<j, \sigma _i<\sigma_j\\c_j\neq 0}}1.$$
We begin by computing the sum over all colored permutations $\sigma$ of the expression  
$$c\sum _{\substack{i<j, \sigma _i<\sigma_j\\c_j\neq 0}}1.$$
For this reason, consider $$C_{G_{c,n}} = \sum _{\sigma \in G_{c,n}}c\sum _{\substack{i<j, \sigma _i<\sigma_j\\c_j\neq 0}}1.$$
To evaluate this, we choose positions
$i<j$ in $\binom{n}{2}$ ways, permute the remaining $n-2$ elements in $(n-2)!$ ways, and assign colors in  $c^{n-2}$ ways to the positions other than $i$ and $j$. We then choose a color for  $i$ (with $c$ options) and one color for $j$ (with $c-1$ options). This yields:
\begin{align*}
    C_{G_{c,n}}  &= c\sum _{i<j}c^{n-2}\cdot c\cdot(c-1)\binom{n}{2}(n-2)! \\
    &= \frac{c^nn!(c-1)}{2}\binom{n}{2}.
\end{align*}

Next, we distinguish two cases for the indices $i$ and $j$: either (1) $i$ and $j$ are not among the fixed points $\{p_{\ell}\}_{\ell \in [k]}$, or  (2) $i$ is a fixed point while  $j$ is not.

For the first case, as the indices are not part of the fixed points, it is equivalent to considering all possible permutations of the remaining elements in $G_{c,n-k}$. Thus, we obtain
\begin{align*}
C_1 &= \sum _{k = 0}^n(-1)^k\binom{n}{k}C_{G_{c,n-k}}\\
&= \sum _{k = 0}^n(-1)^k\binom{n}{k}\frac{c^{n-k}(n-k)!(c-1)}{2}\binom{n-k}{2}\\ 
&= \frac{n!(c-1)}{2}\sum _{k = 0}^n\frac{(-1)^kc^{n-k}}{k!}\binom{n-k}{2}.
\end{align*}
For the second case, we take $i$ to be one of the $p_{\ell}$'s, say $p_r$, and $j$ to be one of the elements not among the $p_{\ell}$'s. We denote this sum by $C_2$, and  we obtain
\begin{align*}
    C_2
    &=\sum _{k = 1}^n(-1)^k\sum _{p_1<\cdots <p_k}\sum _{\substack{\sigma\in G_{c,n}\\\sigma (p_i)=p_i,\, i\in [n]}}c\sum _{\substack{i<j, i = p_r\\ p_r<\sigma _j\\c_j\neq 0}}1\\
    &=\sum _{k = 1}^n(-1)^k\sum _{p_1<\cdots <p_k}\sum _{\ell = 1}^k(n-p_{\ell}-(k-\ell))^2c(c-1)(n-k-1)!c^{n-k-1}\\
    &=(c-1)\sum _{k = 1}^n(-1)^k(n-k-1)!c^{n-k}\sum _{p_1<\cdots <p_k}\sum _{\ell = 1}^k(n-p_{\ell}-(k-\ell))^2\\
    &=(c-1)\sum _{k = 1}^n(-1)^k(n-k-1)!c^{n-k}\sum _{\ell = 1}^k\sum _{p_{\ell} = \ell}^{n-(k-\ell)}\binom{p_{\ell}-1}{\ell -1}\binom{n-p_{\ell}}{k-\ell}(n-p_{\ell}-(k-\ell))^2\\
    &=(c-1)\sum _{k = 1}^n(-1)^k(n-k-1)!c^{n-k}  \sum _{\ell = 1}^k\sum _{p_{\ell} = \ell}^{n-(k-\ell)}\binom{p_{\ell}-1}{p_{\ell}-\ell }\binom{n-p_{\ell}}{n-k-(p_{\ell}-\ell)}(n-k-(p_{\ell}-\ell))^2.
\end{align*}

 Now, introducing the change of variable $j = p_{\ell} - \ell$, we rewrite the last equality as  
\begin{align*}
    C_2
    &=(c-1)\sum _{k = 1}^{n-1}(-1)^k(n-k-1)!c^{n-k}\sum _{j = 0}^{n-k}\sum _{p_{\ell} = 1}^{n}\binom{p_{\ell}-1}{j }\binom{n-p_{\ell}}{n-k-j}(n-k-j)^2\\
    &=(c-1)\sum _{k = 1}^{n-1}(-1)^k(n-k-1)!c^{n-k}\sum _{j = 0}^{n-k}(n-k-j)^2\sum _{p_{\ell} = 1}^{n}\binom{p_{\ell}-1}{j }\binom{n-p_{\ell}}{n-k-j}\\
    &=(c-1)\sum _{k = 1}^{n-1}(-1)^k(n-k-1)!c^{n-k}\sum _{j = 0}^{n-k}(n-k-j)^2\binom{n}{n-k+1}\\
    &=(c-1)\sum _{k = 1}^{n-1}(-1)^k(n-k-1)!c^{n-k}\binom{n}{k-1}\sum _{j = 0}^{n-k}j^2\\
    &=(c-1)\sum _{k = 1}^{n-1}(-1)^k(n-k-1)!c^{n-k}\frac{n!(n-k)(n-k+1)(2(n-k)+1)}{6(k-1)!(n-k+1)!}\\
    &=\frac{n!(c-1)}{6}\sum _{k = 1}^{n-1}\frac{(-1)^kc^{n-k}(2(n-k)+1)}{(k-1)!} 
\end{align*}
Putting it all together, $t_{n}^{(c)}= A+B+\left (C_1+C_2\right )$, we obtain the desired result.
\end{proof}

In Table \ref{tab2}, we show the first few values of the sequence $t_{n}^{(c)}$.

\begin{table}[htp]
\begin{tabular}{l||lllllll}
  $c\backslash n$ &  $1$ & $2$ & $3$ & $4$ & $5$ & $6$ & $7$ \\ \hline\hline
   $c= 1 $&0 &1 &4 &34 &260 &2275 &21784 \\
$c= 2 $&1 &12 &149 &2048 &31345 &534524 &10091893 \\
$c= 3 $&3 &51 &945 &19230 &438195 &11146977 &314382369 \\
$c= 4 $&6 &136 &3334 &90016 &2725430 &92206024 &3460668694 \\
$c= 5 $&10 &285 &8690 &292490 &11047750 &466523015 &21862413790 \\
$c= 6 $&15 &516 &18819 &758784 &34348335 &1738896660 &97715487219 \\
$c= 7 $&21 &847 &35959 &1689478 &89145245 &5261671429 &344776287583 \\
$c= 8 $&28 &1296 &62780 &3368000 &202965820 &13684414352 &1024393015324 \\
$c= 9 $&36 &1881 &102384 &6175026 &418429080 &31725812619 &2671026822324 
\end{tabular}
\caption{Values of $t_{n}^{(c)}$ for $1\leq n \leq 7$ and $1\leq c\leq 9$.}\label{tab2}
\end{table}

\begin{openproblem}
Let $t_c(n,k)$ denote the number of colored derangements of size $n$ with exactly $k$ inversions. 
Find an expression to calculate this sequence. Additionally, determine the generating functions for the sequences \( t_c(n,k) \) and \( t^{(c)}(n) \).\end{openproblem}

\section{Inversion in Colored Involutions}

In this final section, we study the inversion statistic on the set of colored involutions. As in the classical setting, colored permutations can be expressed as a product of disjoint cycles. For example, the colored permutation 
$\overline{\overline{5}}\,2\,\overline{1}\,\overline{\overline{4}}\,\overline{3}\in G_{3,5}$ can be written in cycle notation as \[
(1\,\overline{\overline{5}}\,3\,\overline{1}\,5\,\overline{3}\,\overline{\overline{1}}\,\overline{5}\,\overline{\overline{3}})(2)(\overline{2})(\overline{\overline{2}})(4\,\overline{\overline{4}}\,\overline{4}).
\]
A \emph{colored involution} is a colored permutation $\sigma \in G_{c,n}$ satisfying $\sigma^2 = \texttt{id}$, where $\texttt{id}$ denotes the identity permutation. For example, the colored permutation $21\overline{\overline{4}}\,\overline{3}\,5 \in G_{3,5}$ is a colored involution. It can be written in cycle notation as \[
(1\,2)(\overline{1}\,\overline{2})(\overline{\overline{1}}\,\overline{\overline{2}})(3\,\overline{\overline{4}})(\overline{3}\,4)(\overline{\overline{3}}\,\overline{5})(5)(\overline{4})(\overline{\overline{5}}).
\]
Observe that a colored permutation is a colored involution if and only if each cycle in its decomposition has length at most two. 

From the definition, we consider the following cases for $a\in [n]$: 
\begin{enumerate}
    \item  If $\sigma (a) = a^{[c_1]}$, then 
    \[\sigma ^2 (a) = \sigma(\sigma (a)) = \sigma (a^{[c_1]}) = \sigma (a)^{[c_1]} = a^{[2c_1]}.\] For $\sigma$ to be an involution, we must have $a^{[2c_1]} = a^{[0]}$, which implies $2c_1 \equiv 0 \pmod{c}$. That is, either $c_1 = 0$ or $c_1 = \frac{c}{2}$ if $c$ is even.

    \item If $\sigma(a) = b^{[c_1]}$ and $\sigma(b) = a^{[c_2]}$, then
    \[
    \sigma^2(a) = \sigma(\sigma(a)) = \sigma(b^{[c_1]}) = \sigma(b)^{[c_1]} = a^{[c_1 + c_2]}.
    \]
   In order for $\sigma$ to be an involution, we must have $c_1 + c_2 \equiv 0 \pmod{c}$.
\end{enumerate}

Let $r_{n}^{(c)}$ denote the total number of colored involutions in all colored permutations of size $n$. 

\begin{theorem}
For all $n\geq 0$ and $c\geq 1$, we have
    $$r_n^{(c)}= \sum _{\substack{0\leq k \leq n\\ n\equiv k \pmod 2}}\binom{n}{k}\left (3+(-1)^c\right )^k\frac{c^{\frac{n-k}{2}}(n-k)!}{2^{\frac{n+k}{2}}\left (\frac{n-k}{2}\right )!}.$$
    Moreover, the exponential generating function for the sequence $r_n^{(c)}$ is $$\exp \left (\frac{((-1)^c+3)x+cx^2}{2}\right ).$$
\end{theorem}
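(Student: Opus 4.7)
The plan is to decompose each colored involution $\sigma \in G_{c,n}$ according to the cycle structure of its underlying permutation $|\sigma|$, and then count the color assignments compatible with $\sigma^2=\mathrm{id}$. Since the only cycles that can appear have length at most two, $|\sigma|$ is itself an ordinary involution on $[n]$. If $|\sigma|$ has $k$ fixed points, then it has $(n-k)/2$ transpositions, forcing $n\equiv k\pmod 2$, and the number of such underlying involutions in $\Sym_n$ is the classical
\[
\binom{n}{k}\cdot\frac{(n-k)!}{2^{(n-k)/2}\bigl((n-k)/2\bigr)!}.
\]

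Next I would count the valid colorings attached to each underlying cycle, reusing precisely the two cases already analyzed just before the theorem. For a fixed point $a$ of $|\sigma|$ we write $\sigma(a)=a^{[c_1]}$; the requirement $2c_1\equiv 0\pmod c$ leaves exactly one admissible color ($c_1=0$) when $c$ is odd and two ($c_1\in\{0,c/2\}$) when $c$ is even, giving in both cases $(3+(-1)^c)/2$ choices. For a transposition $(a,b)$ we write $\sigma(a)=b^{[c_1]}$, $\sigma(b)=a^{[c_2]}$; the condition $c_1+c_2\equiv 0\pmod c$ yields exactly $c$ admissible pairs. These choices are independent across distinct cycles, so the weighted total for a given $k$ is
\[
\binom{n}{k}\cdot\frac{(n-k)!}{2^{(n-k)/2}\bigl((n-k)/2\bigr)!}\cdot\Bigl(\tfrac{3+(-1)^c}{2}\Bigr)^{\!k}\cdot c^{(n-k)/2}.
\]
Summing over admissible $k$ and absorbing the factor $2^k$ from $(\tfrac{3+(-1)^c}{2})^k$ into $2^{(n-k)/2}$ to produce $2^{(n+k)/2}$ gives the claimed closed form for $r_n^{(c)}$.

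For the exponential generating function I would invoke the exponential formula: colored involutions are disjoint unions of ``atoms'' of two species, fixed points (exponential weight $x$) carrying $(3+(-1)^c)/2$ internal colorings, and transpositions (exponential weight $x^2/2$) carrying $c$ internal colorings. The EGF for the atoms is therefore $\frac{3+(-1)^c}{2}\,x+\frac{c}{2}\,x^2$, and the exponential formula immediately yields
\[
\sum_{n\ge 0}r_n^{(c)}\frac{x^n}{n!}=\exp\!\left(\frac{(3+(-1)^c)x+cx^2}{2}\right).
\]
There is no real obstacle here: the cycle analysis needed for $\sigma^2=\mathrm{id}$ has already been carried out in the excerpt, and what remains is standard enumeration. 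The only delicate points are to keep track of the parity condition $n\equiv k\pmod 2$ when summing and to correctly combine the denominators $2^{(n-k)/2}$ and $2^k$ into the exponent $(n+k)/2$ appearing in the stated formula.
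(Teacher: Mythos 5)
Your proposal is correct and follows essentially the same argument as the paper: choose the $k$ fixed points, count the underlying involutions with $\binom{n}{k}\,(n-k)!/\bigl(2^{(n-k)/2}((n-k)/2)!\bigr)$, and multiply by $\bigl(\tfrac{3+(-1)^c}{2}\bigr)^k$ color choices for fixed points and $c^{(n-k)/2}$ for the $2$-cycles, using exactly the congruence conditions $2c_1\equiv 0$ and $c_1+c_2\equiv 0 \pmod c$ derived before the theorem. The only cosmetic difference is that you obtain the exponential generating function directly from the exponential formula applied to the two atom species, whereas the paper reads it off from the closed-form sum; the two derivations are equivalent.
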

\begin{proof}
    Let $\sigma$ be a colored involution. Then $\sigma$ can be written as a product of fixed points and cycles of size two. 

    Suppose $\sigma$ has exactly $k$ fixed points. These can be chosen in $\binom{n}{k}$ ways. Each fixed point can have only one color if $c$ is odd (interpreted as having no color), or two possible colors if $c$ is even (either no color or color $c/2$). Hence, the number of ways to assign colors to the fixed points is $$\left (1+\left (\frac{(-1)^c+1}{2}\right )\right )^k.$$ 
    
    The remaining $n-k$ elements must be arranged in cycles of size two,  so $n-k$ must be even. The number of ways to form these cycles is \[\frac{(n-k)!}{2^{\frac{n-k}{2}}\left (\frac{n-k}{2}\right )!},\] since we permute the elements, group them into non-ordered pairs, and ignore the order of the pairs.
    
    For the coloring of each cycle of size two, we notice that in order for the permutation to be an involution, the colors $r_a$ and $r_b$ of the elements in the cycle must satisfy $r_a+r_b = c$. There are exactly $c$ such pairs,  so there are $c^{\frac{n-k}{2}}$ ways to assign colors to all these elements. 
    
    Summing over all valid values of $k$, namely those such that $n - k$ is even, gives the stated formula.

The exponential generating function follows directly from the closed-form expression.
\end{proof}

In Table \ref{tab3} we show the first few values of the sequence $r_{n}^{(c)}$.
\begin{table}[htp]
\begin{tabular}{l||lllllll}
  $c\backslash n$ &  $1$ & $2$ & $3$ & $4$ & $5$ & $6$ & $7$ \\ \hline\hline
   $c= 1 $&1 & 1 & 2 & 4 & 10 & 26 & 76 \\
$c= 3 $&1 & 2 & 6 & 20 & 76 & 312 & 1384 \\
$c= 4 $& 1 & 1 & 4 & 10 & 46 & 166 & 856 \\
$c= 5 $&1 & 2 & 8 & 32 & 160 & 832 & 4864 \\
$c= 6 $&1 & 1 & 6 & 16 & 106 & 426 & 3076 \\
$c= 7 $&1 & 2 & 10 & 44 & 268 & 1592 & 11224 \\
$c= 8 $&1 & 2 & 12 & 56 & 400 & 2592 & 21184 \\
$c= 9 $&1 & 1 & 10 & 28 & 298 & 1306 & 14716 
\end{tabular}
\caption{Values of $r_{n}^{(c)}$ for $1\leq n \leq 7$ and $1\leq c\leq 9$.}\label{tab3}
\end{table}

Let $i_{n}^{(c)}$ denote the total number of inversions in all colored involutions of size $n$, and let $I_{c,n}$ denote the set of colored involutions in $G_{c,n}$. We first consider the classical case  $c = 1$. 

\begin{theorem}
\label{lemmaInvolutions}
For any integer $n \geq 0$, the total number of inversions in all involutions  of size $n$ is given by 
    $$i^{(1)}_{n} = \sum _{\sigma \in I_{1,n}}\inv (\sigma) = \binom{n}{2}r_{n-2}^{(1)}+2\binom{n}{3}r_{n-3}^{(1)} + 6\binom{n}{4}r_{n-4}^{(1)}.$$
    \end{theorem}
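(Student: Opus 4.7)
The plan is to swap the order of summation,
\begin{align*}
i_n^{(1)} \;=\; \sum_{\sigma \in I_{1,n}} \inv(\sigma) \;=\; \sum_{1 \le i < j \le n} |\{\sigma \in I_{1,n} : \sigma_i > \sigma_j\}|,
\end{align*}
and then, for each pair $i<j$, classify the contributing involutions according to the size of the set $S := \{i,j,\sigma_i,\sigma_j\}$. Since every cycle of $\sigma$ has length at most two, the cycles through $i$ and $j$ together use between $2$ and $4$ distinct elements, so $|S|\in\{2,3,4\}$. The three resulting cases will match exactly the three terms in the claimed formula.

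If $|S|=2$, then either both of $i,j$ are fixed points (no inversion) or $(i\,j)$ is a single transposition, in which case $\sigma_i=j>i=\sigma_j$ always yields an inversion; the remaining $n-2$ positions carry an arbitrary involution of $[n]\setminus\{i,j\}$, so this case contributes $\binom{n}{2}\,r_{n-2}^{(1)}$ inversions. If $|S|=3$, exactly one of $i,j$ is a fixed point and the other is paired with a third element $k$; an inversion arises only in the two configurations $\sigma_i=i,\ \sigma_j=k$ with $k<i<j$, or $\sigma_j=j,\ \sigma_i=k$ with $i<j<k$. Each is indexed by an ordered triple of distinct elements of $[n]$, giving $\binom{n}{3}$ triples and an arbitrary involution on the remaining $n-3$ positions, for a total of $2\binom{n}{3}\,r_{n-3}^{(1)}$ inversions.

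If $|S|=4$, the cycles through $i$ and $j$ are two disjoint transpositions $(i\,k)(j\,l)$ on a $4$-subset $T\subseteq[n]$, and an inversion occurs precisely when $k>l$. For each such $T$, there are $\binom{4}{2}=6$ ways to designate which two elements of $T$ play the role of positions $\{i,j\}$; once chosen, both orderings $i<j$ and $k>l$ are forced, and the complementary $n-4$ elements carry an arbitrary involution. This contributes $6\binom{n}{4}\,r_{n-4}^{(1)}$ inversions. Adding the three cases yields the identity.

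The main subtlety is the coefficient $6$ in the $|S|=4$ case: one must verify that every quadruple $(i,j,\sigma_i,\sigma_j)$ with the required inversion property arises exactly once from a splitting of a $4$-subset into ordered position and value pairs. The other two cases are essentially direct bookkeeping, and the passage from the total inversion count to a sum of indicator functions is routine.
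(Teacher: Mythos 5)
Your proposal is correct and follows essentially the same route as the paper: swap the order of summation and classify the pairs $i<j$ by the cycles of $\sigma$ through $i$ and $j$, with your $|S|=2,3,4$ cases corresponding exactly to the paper's four cases (its two three-element cases merged into your $|S|=3$ case) and yielding the same counts $\binom{n}{2}r_{n-2}^{(1)}$, $2\binom{n}{3}r_{n-3}^{(1)}$, and $6\binom{n}{4}r_{n-4}^{(1)}$.
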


\begin{proof}
We begin by rewriting the total number of inversions as
$$i^{(1)}_{n} = \sum _{\sigma \in I_{1,n}}\inv (\sigma) = \sum _{\sigma \in I_{1,n}}\sum _{i<j\,,\,\sigma _j>\sigma _i}1 = \sum _{i<j}\left | \{\sigma \in I_{n,1}: i<j \text{ and  }\sigma _i>\sigma _j\}\right |.$$
    
    We now classify and count the possible ways in which $\sigma_i > \sigma_j$ can occur given that $i<j$, depending on the types of cycles involved in $\sigma$.
   
    \begin{enumerate}
        \item  $\sigma _i  = j$ and $\sigma _j  = i$. This case contributes one inversion. There are $\binom{n}{2}$ such pairs, and removing the pair $(i,j)$ yields an involution of size $n-2$,  so we get a total contribution of $\binom{n}{2}r_{n-2}^{(1)}$.
        
        \item $\sigma _i = i$ and $\sigma _j = k \neq j$. In this case, for $\sigma_i > \sigma_j$ to hold, we must have $\sigma_j = k < i < j$. Once the triple $(i,j,k)$ is removed, we are left with an involution of size $n-3$, so this configuration contributes $\binom{n}{3}r_{n-3}^{(1)}$.
        
        \item  $\sigma _i = k\neq i$ and $\sigma _j = j$. In this case the order must be $i<j<k=\sigma _i$. This case is symmetric to the previous one and contributes another $\binom{n}{3}r_{n-3}^{(1)}$.
        
        \item $\sigma _i = k$ and $\sigma _j = \ell$.  The inversion condition $\sigma_i > \sigma_j$ becomes $k > \ell$ with $i < j$. There are 6 such ways to choose and order $(i,j,k,\ell)$ among the 4 elements, and removing all four gives an involution of size $n-4$. So, this case contributes $6 \binom{n}{4} r_{n-4}^{(1)}$. 
            \end{enumerate}
Adding the contributions from all cases completes the proof.
\end{proof}

We now generalize the previous result to colored involutions using the statistic $\inv_c$. We compute $$i_n^{(c)} = \sum _{\sigma \in I_{c,n}}\inv _c(\sigma).$$

\begin{theorem}
For all $n\geq0$ and $c\geq 1$, we have
  \begin{multline*}
       i_n^{(c)}= \frac{nc\left ((-1)^c+1\right )}{4}r_{n-1}^{(c)} + c \left (c+1+(-1)^c\right ) \binom{n}{2}r_{n-2}^{(c)}\\
       +2c^2\left ((-1)^c+2\right )\binom{n}{3}r_{n-3}^{(c)}+6c^3\binom{n}{4}r_{n-4}^{(c)}.
   \end{multline*} 
    \end{theorem}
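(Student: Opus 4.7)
The plan is to use~\eqref{maindef} to split $\inv_c(\sigma)$ into its three summands and write $i_n^{(c)}=A+B+C$, where
$A=\sum_{\sigma\in I_{c,n}}\inv(|\sigma|)$,
$B=\sum_{\sigma\in I_{c,n}}\col(\sigma)$, and
$C=c\sum_{\sigma\in I_{c,n}}|\{(i,j):i<j,\;|\sigma|_i<|\sigma|_j,\;c_j\neq 0\}|$.
Each piece will be computed by a position-pair case analysis in the spirit of Theorem~\ref{lemmaInvolutions}, and the stated identity will emerge by collecting the coefficients of $r_{n-k}^{(c)}$ for $k=1,2,3,4$.

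For $A$, the same four cases from the proof of Theorem~\ref{lemmaInvolutions} apply verbatim to the underlying permutation $|\sigma|$ and give the configuration counts $\binom{n}{2}$, $\binom{n}{3}$, $\binom{n}{3}$, $6\binom{n}{4}$. Weighting each by the valid color choices ($c$ per $2$-cycle and $\tfrac{3+(-1)^c}{2}$ per fixed point) and by the number $r_{n-k}^{(c)}$ of extensions produces
$$A=c\binom{n}{2}r_{n-2}^{(c)}+c(3+(-1)^c)\binom{n}{3}r_{n-3}^{(c)}+6c^2\binom{n}{4}r_{n-4}^{(c)}.$$
For $B$, I will sum position-by-position: if $p\in[n]$ is a fixed point, the sum of its color over the valid choices is $\tfrac{c(1+(-1)^c)}{4}$; if $p$ is in a $2$-cycle, the color of $p$ summed over the $c$ colorings of that cycle equals $\binom{c}{2}$, and there are $n-1$ possible partners. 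This yields
$$B=\tfrac{nc(1+(-1)^c)}{4}r_{n-1}^{(c)}+c(c-1)\binom{n}{2}r_{n-2}^{(c)}.$$

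For $C$, I will redo the position-pair analysis under the reversed inequality $|\sigma|_i<|\sigma|_j$ together with the constraint $c_j\neq 0$. Five cases now arise: ``same $2$-cycle'' (contributes~$0$); ``$i$ fixed, $j$ in a $2$-cycle''; ``$j$ fixed, $i$ in a $2$-cycle''; ``$(i,k)$ and $(j,\ell)$ distinct $2$-cycles''; and a new case ``both $i,j$ are fixed points'' that was absent from $A$ (since $i<j$ already forces $|\sigma|_i<|\sigma|_j$). The underlying configuration counts will be $\binom{n}{2}$, $2\binom{n}{3}$, $2\binom{n}{3}$, $6\binom{n}{4}$, $\binom{n}{2}$ — the triple counts are doubled relative to Cases~(2),~(3) of Theorem~\ref{lemmaInvolutions} because the reversed inequality no longer pins down the relative position of the third element. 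The $c_j\neq 0$ constraint then modifies the color factors at position $j$: a $2$-cycle containing $j$ admits $c-1$ (not $c$) colorings, and a fixed point at $j$ admits $\tfrac{(-1)^c+1}{2}$ (not $\tfrac{3+(-1)^c}{2}$). The ``both fixed'' case simplifies via the pleasant identity $\tfrac{(3+(-1)^c)((-1)^c+1)}{4}=1+(-1)^c$. Summing the five weighted contributions and multiplying by the leading~$c$ produces $C$.

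Combining $A+B+C$, the coefficients of $\binom{n}{2}r_{n-2}^{(c)}$ and $\binom{n}{3}r_{n-3}^{(c)}$ collapse respectively to $c(c+1+(-1)^c)$ and $2c^2(2+(-1)^c)$, matching the claim; the coefficients of $r_{n-1}^{(c)}$ and $\binom{n}{4}r_{n-4}^{(c)}$ come directly from $B$ and from $A+C$ respectively. The main obstacle will be the bookkeeping for $C$: the case enumeration, the parity-dependent color factors, and the asymmetric ($c_j\neq 0$ only) restriction at position $j$ must all be tracked simultaneously. A useful sanity check is specialization to $c=1$: both $B$ and $C$ then vanish (since $c_j\neq 0$ never holds), and $A$ reduces to Theorem~\ref{lemmaInvolutions}.
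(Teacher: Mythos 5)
Your proposal is correct and follows essentially the same route as the paper: the same decomposition $i_n^{(c)}=A+B+C$ from \eqref{maindef}, the same cycle-type case analysis inherited from Theorem~\ref{lemmaInvolutions}, and the same parity-dependent color factors ($c$ or $c-1$ per $2$-cycle, $\tfrac{3+(-1)^c}{2}$ or $\tfrac{(-1)^c+1}{2}$ per fixed point), with all intermediate expressions for $A$, $B$, and $C$ agreeing with those in the paper after trivial algebraic rewriting. The configuration counts ($2\binom{n}{3}$ for the mixed cases, $6\binom{n}{4}$ for two disjoint $2$-cycles, and the extra "both fixed" case contributing $((-1)^c+1)\binom{n}{2}r_{n-2}^{(c)}$) are all correct.
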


\begin{proof}
    Using Equation \eqref{maindef}, we decompose the problem into computing the corresponding three distinct contributions:
    \begin{enumerate}
        \item $A = \sum _{\sigma\in I_{c,n}} \inv (|\sigma|)$.  \\       The proof of Theorem~\ref{lemmaInvolutions} can be extended to this more general setting. The only difference is that we must now consider the number of valid colorings in each case.  

          In the first case, when $\sigma_i = j$ and $\sigma_j = i$, the elements $i$ and $j$ form a cycle of size two. Such a cycle can be colored in $c$ different ways, corresponding to the number of valid colorings that satisfy the involution condition.

            In the second case, we consider configurations where there is one fixed point and one cycle of size 2.  The fixed point can be colored in either 1 or 2 ways, depending on the parity of $c$. The cycle of size 2, as before, can be colored in $c$ ways.    

            In the third case, we consider two disjoint cycles of length 2. Each cycle can be colored in $c$ ways independently, giving a total of $c^2$ colorings for the pair.
            
        Combining all these contributions, we obtain the following expression for $A$:

$$A = c\binom{n}{2}r_{n-2}^{(c)} + 2c\left(1+\frac{(-1)^c+1}{2}\right)\binom{n}{3}r_{n-3}^{(c)}+6c^2\binom{n}{4}r_{n-4}^{(c)}.$$

        \item $B =  \sum _{\sigma\in I_{c,n}} \col(\sigma)$. \\  We now compute the total sum of colors over all colored involutions. This can be done by grouping contributions according to the types of cycles: either fixed points  or cycles of size 2.
        
First, consider the fixed points. A fixed point contributes a color only when $c$ is even, in which case the possible colors are $0$ and $c/2$.  Thus, for each of the $n$ positions, we may select it to be a fixed point, assign it the color $c/2$, and count the number of colored involutions on the remaining $n-1$ elements. The total contribution from this case is therefore $(c/2)n r_{n-1}^{(c)}$,  valid only when $c$ is even.

Next, consider the cycles of size 2.  We know that the two colors will add up to $c$ and so we pick the cycle in $\binom{n}{2}$ ways and the color of the smallest element of the cycle in $c-1$ ways which yields $c(c-1)\binom{n}{2}r_{n-2}^{(c)}$ ways. 

Combining both contributions, we obtain:
        $$B = \frac{n\cdot c}{2}\frac{((-1)^c+1)}{2}r_{n-1}^{(c)}+c(c-1)\binom{n}{2}r_{n-2}^{(c)}.$$
       
        \item $C = \displaystyle \sum _{\sigma \in I_{c,n} } \sum _{\substack{i<j, \sigma _i<\sigma _j\\c_j\neq 0}}1=\sum _{i<j}|\{\sigma \in I_{c,n}: \sigma _i< \sigma _j \text{ and } c_j\neq 0\}|.$ \\
        We follow a similar case analysis as in the computation of $A$ or Theorem~\ref{lemmaInvolutions}, but note that here we cannot have  
        $\sigma _i = j$ and $\sigma _j = i$. Instead, we have the case where both $i$ and $j$ are fixed points. If $c$ is even, there are two options of colors for $i$ and if $c$ is odd, then $j$ has no color. This yields $2  \left (\frac{(-1)^c + 1}{2}\right ) \binom{n}{2}r_{n-2}^{(c)}$  as the total contribution of this case, accounting for the parity of $c$.
        
        For the case $\sigma _j = j$ and $\sigma _ i = k$, we can choose $i,j,k$ with the condition $i,k<j$ in $2\binom{n}{3}$ ways.  For $j$ to contribute a color, $c$ must be even, and $i,k$ form a cycle, so we color it with $c$ colors. For the case $\sigma _i = i$ and $\sigma _ j = k$, we can choose $i,j,k$ with the condition $i<k,j$ in $2\binom{n}{3}$ and there are $c-1$ options of color for the pair $k,j$ and condition to the parity of $i$ there are either $1$ or $2$. For the case in which $\sigma _i = k$ and $\sigma _j=\ell$, we can choose these in $\binom{n}{4}$ ways but the condition $i<j$ and $k<\ell$ says that we can choose $2$ out of these form to know who $i,j$ are, and so we can do this in $\binom{n}{4}\binom{4}{2} = 6\binom{n}{4}$ ways. There are $c$ ways to choose the color for the pair $i,k$ and $c-1$ for the pair $j,\ell$. Summing over all these cases we get
        \begin{multline*}
            C= ((-1)^c+1)\binom{n}{2}r_{n-2}^{(c)}+((-1)^c+3)\binom{n}{3}(c-1)r_{n-3}^{(c)}\\
            +((-1)^c+1)c\binom{n}{3}r_{n-3}^{(c)} + 6c(c-1)\binom{n}{4}r_{n-4}^{(c)}.
        \end{multline*}
    \end{enumerate}
Combining all contributions, we conclude that  $i^{(c)}_{n} = A+B+c\cdot C,$ which completes the proof.
\end{proof}

In Table \ref{tab4} we show the first few values of the sequence $i_{n}^{(c)}$.
\begin{table}[htp]
\begin{tabular}{l||llllllll}
  $c\backslash n$ &  $1$ & $2$ & $3$ & $4$ & $5$ & $6$ & $7$ & $8$\\ \hline\hline
   $c= 1 $&0 & 1 & 5 & 26 & 110 & 490 & 2086 & 9240 \\
$c= 2 $&1 & 12 & 90 & 608 & 3900 & 24912 & 159544 & 1036800 \\
$c= 3 $& 0 & 9 & 45 & 450 & 2430 & 19530 & 117054 & 904680 \\
$c= 4 $&2 & 32 & 288 & 2560 & 20800 & 175104 & 1455104 &
   12517376 \\
$c= 5 $&0 & 25 & 125 & 1850 & 10750 & 123250 & 829150 & 8911000
   \\
$c= 6 $&3 & 60 & 594 & 6432 & 59700 & 606096 & 5862360 &
   60569088 \\
$c= 7 $&0 & 49 & 245 & 4802 & 28910 & 429730 & 3065734 &
   42024360 \\
$c= 8 $&4 & 96 & 1008 & 12800 & 129600 & 1525248 & 16344832 &
   194543616 \\
$c= 9 $&0 &81 & 405 & 9882 & 60750 & 1108890 & 8197686 &
   136465560 \\
\end{tabular}
\caption{Values of $i_{n}^{(c)}$ for $1\leq n \leq 8$ and $1\leq c\leq 9$.}\label{tab4}
\end{table}

\begin{openproblem}
Let $b_{c}(n,k)$ denote the number of colored involutions of size $n$ with exactly $k$ inversions. 
Find an expression to calculate this sequence. Additionally, determine the generating function for the sequence \( b_c(n,k) \).\end{openproblem}

\end{document}